\documentclass[reqno, 12pt]{article}

\pdfoutput=1

\usepackage[linesnumbered,ruled,vlined]{algorithm2e}
\usepackage{array}
\usepackage[centertags]{amsmath}
\usepackage{amsfonts}
\usepackage{amssymb}
\usepackage{amsthm}
\usepackage{booktabs}
\usepackage{caption}
\usepackage{color}
\usepackage{diagbox}
\usepackage{enumerate}
\usepackage{extarrows}
\usepackage{float}
\usepackage{graphics}
\usepackage{hyperref}
\usepackage{latexsym}
\usepackage{longtable}
\usepackage{multirow}
\usepackage{newlfont}
\usepackage[sort,compress,numbers]{natbib}
\usepackage{rotating}
\usepackage{seqsplit}
\usepackage{tocloft}
\usepackage{titlesec}
\usepackage[utf8]{inputenc}

\newtheorem{thm}{Theorem}[section]
\newtheorem{cor}[thm]{Corollary}
\newtheorem{lem}[thm]{Lemma}
\newtheorem{prop}[thm]{Proposition}

\theoremstyle{mydefinition}
\newtheorem{dfn}[thm]{Definition}
\theoremstyle{myremark}
\newtheorem{rem}[thm]{Remark}
\newtheorem{exa}[thm]{Example}

\newtheorem{alg}[thm]{Algorithm}

\allowdisplaybreaks[4]
\SetKwInput{KwInput}{Input}                
\SetKwInput{KwOutput}{Output}              

\def\N{\mathbb{N}}
\def\a{\mathbf{a}}
\def\b{\mathbf{b}}
\def\0{\mathbf{0}}
\def\1{\mathbf{1}}
\def\Z{\mathbb{Z}}
\def\CT{\mathop{\mathrm{CT}}}
\def\y{\mathbf{y}}
\def\sgn{\mathrm{sgn}}

\newcommand{\e}{\mathrm{e}}

\newcommand{\C}{{\mathbb{C}}}

\renewcommand{\L}{{\textsf{L}}}

\renewcommand{\P}{{\mathbb{P}}}

\newcommand{\R}{{\mathbb{R}}}

\newcommand{\Q}{{\mathbb{Q}}}

\title{Polynomial-Time Evaluation of Aardal-Lenstra Denumerants via Constant Term Method}

\author{Jinlong Tang$^{\color{blue} \dag}$, Guoce Xin$^{\color{blue} \S}$, and Zihao Zhang$^{\color{blue} \P}$
\\[2mm]
{\small $^{\color{blue} \dag, \S}$ School of Mathematical Sciences,}\\[-0.8ex]
{\small Capital Normal University, Beijing, 100048, P.R.~China}\\
{\small $^{\color{blue} \P}$ School of Mathematics and Statistics,}\\[-0.8ex]
{\small Beijing Institute of Technology, Beijing, 102400, P.R.~China}\\
{\small {\color{blue} $^\dag$} Email address: jinlong\_tang@cnu.edu.cn}\\
{\small {\color{blue} $^\S$} Email address: guoce\_xin@163.com}\\
{\small {\color{blue} $^\P$} Email address: zihao-zhang@foxmail.com}
}

\date{\today}

\begin{document}

\maketitle

\begin{abstract}
Aardal and Lenstra systematically studied hard knapsack problems of the form
$a_1x_1+\cdots+a_nx_n=b$, where $a_i=p_iM+r_iN$, $(M,N)$ is a coprime pair
of positive integers, and the integers $|p_i|, |r_i|$ are small relative to $M$
and $N$. We investigate the corresponding challenging denumerant problem
(i.e., counting the number of nonnegative integer solutions) and present a
polynomial-time algorithm. This eliminates the computational bottlenecks caused
by large values of $M$, $N$ and $b$. The proposed algorithm achieves a time
complexity of $O(n^4\Delta^2\log n\log\Delta)$, which depends solely on
 the parameters $n$ and $\Delta=\max_{i,j}|r_i p_j - r_j p_i|$.
Moreover, we consider the problem of expressing a general vector
$(a_1,\dots,a_n)$ in the above form using the LLL algorithm.
\end{abstract}

\noindent
\begin{small}
\emph{2020 Mathematics subject classification}: Primary 05A15; Secondary 05A17; 05A19; 15A15.
\end{small}

\noindent
\begin{small}
\emph{Keywords}: Denumerant; Constant Term Extraction; Rational Generating Functions; Shortest Vector Problem.
\end{small}

\section{Introduction}
This paper studies the computation of the denumerant $d(b;a_1,\dots,a_n)$,
which counts the number of nonnegative integer solutions to the linear
Diophantine equation $a_1 x_1 + \cdots + a_n x_n = b$.
Throughout, we write $\mathbf{a} = (a_1, a_2, \dots, a_n)$ for a sequence of
positive integers, and we assume $b \in \N$ and $\gcd(\mathbf{a}) = 1$
unless specified otherwise.

It is well known that
\begin{equation}
d(b;\mathbf{a}) = \CT_q \frac{q^{-b}}{\prod_{i=1}^{n}(1 - q^{a_i})},
\label{equ-denumerant}
\end{equation}
where $\CT\limits_q$ extracts the constant term of a Laurent series in $q$.
Geometrically, $d(b;\mathbf{a})$ also equals the number of lattice points in
the $(n-1)$-dimensional rational convex polytope
\[
P(b;\mathbf{a}) = \{\, \mathbf{x} \in \R^n :
\mathbf{a} \cdot \mathbf{x} = b,\ \mathbf{x} \ge \mathbf{0} \,\}.
\]

Computing $d(b;\mathbf{a})$ is $\#P$-hard, and even deciding whether
$d(b;\mathbf{a}) > 0$ is notoriously NP-complete.
In computational geometry, two polynomial-time lattice-point counting
algorithms are available for special classes of polytopes.
The first is Barvinok's algorithm~\cite{barvinok1994polynomial}, which runs
in polynomial time when the dimension is fixed.
The second is the $\Delta$-modular algorithm of Gribanov and
Zolotykh~\cite{GribanovZolotykh2022}, which runs in polynomial time when
both the parameter $\Delta$ and the number of additional constraints are fixed.
Applied to the polytope $P(b;\mathbf{a})$, Barvinok's algorithm requires the
dimension $n-1$ to be fixed, whereas the $\Delta$-modular algorithm requires
each coefficient $a_i$ to be bounded by a constant $\Delta$
(i.e., $\max\limits_i a_i \le \Delta$).

Our work is motivated by the hard knapsack instances introduced by
Aardal and Lenstra~\cite{AL04}.
Modern solvers handle the classical knapsack problem (finding an integer
solution to $\sum_{i=1}^n a_i x_i = b$, if one exists) extremely well in
general, but they struggle on specially crafted configurations.
In these hard instances the coefficients take the form
\[
a_i = p_i M + r_i N \in \mathbb{Z}_{+},
\]
where $M$ and $N$ are extremely large compared with $p_i \in \mathbb{N}$
and $|r_i|$, and the right-hand side $b$ is chosen near the Frobenius
number of $\mathbf{a}$ (the largest integer $g$ with $d(g;\mathbf{a}) = 0$).
The resulting polytope $P(b;\mathbf{a})$ is then extremely flat, creating a
huge integrality gap that severely degrades the performance of standard
branch-and-bound algorithms.
Aardal and Lenstra elegantly overcame this difficulty for the feasibility
problem by reformulating it in an LLL-reduced lattice basis, which
dramatically prunes the search tree.

These challenging instances have since been studied extensively through
Barvinok-type methods, decomposable reformulations, and constant-term
techniques~\cite{DeLoeraHawsHemmeckeHugginsYoshida2005,
KrishnamoorthyPataki2009, AardalScavuzzoWolsey2023, XZZ25,
DeLoeraHemmeckeTauzerYoshida2004, BBDDKV15, XinZhang2023,
xin2024combinatorial}.
However, although current algorithms can compute the associated denumerants
efficiently for $n \le 10$, their running times become prohibitive for larger
instances, falling far short of the efficiency that lattice reformulations
achieve for the feasibility problem.

To break through this persistent computational bottleneck, we propose a new
approach to denumerant computation from the viewpoint of algebraic
combinatorics.
More precisely, we use constant-term manipulations to develop
polynomial-time algorithms for certain special families of denumerants.
For a positive integer sequence $\mathbf{a}$, a representation of the form
$\mathbf{a} = \mathbf{p}M + \mathbf{r}N$ is called an \emph{A-L representation}
(named after Aardal and Lenstra), provided that
$\mathbf{p}=(p_{1},\dots,p_{n})$ and $\mathbf{r}=(r_{1},\dots,r_{n})$ are
linearly independent integer vectors, and $M,N$ are coprime positive integers.
We say that $\mathbf{a}$ is a \emph{$B$-bounded A-L sequence} if it admits an
A-L representation with $|p_i|,|r_i| \le B$ for all $i$.
Similarly, $\mathbf{a}$ is a \emph{$\Delta$-modular A-L sequence} if it admits
an A-L representation such that the matrix
$\binom{\mathbf{p}}{\mathbf{r}}$ is $\Delta$-modular;
i.e., $|r_i p_j - r_j p_i| \le \Delta$ for all $i,j$.
The corresponding denumerants are referred to as
\emph{$B$-bounded A-L denumerants} and
\emph{$\Delta$-modular A-L denumerants}, respectively.

Note that a $B$-bounded A-L sequence is automatically a
$2B^{2}$-modular A-L sequence, so the $\Delta$-modular condition defines a
broader class of problems.
Our main contribution is the development of polynomial-time algorithms for
computing denumerants in both of these classes.
Moreover, for a given $\mathbf{a}$, we use the LLL algorithm to find an A-L
representation that makes $\Delta$ as small as possible, thereby approximating
the minimal possible $\Delta$ for which $\mathbf{a}$ is $\Delta$-modular.
Naturally, our algorithms become inefficient when this minimal $\Delta$ is large.

The remainder of this paper is organized as follows. Section 2 reviews existing lattice-point counting methods and outlines two fundamental tools. Section 3 presents a novel constant-term proof demonstrating that a \emph{$\Delta$-bounded denumerant}—defined such that $a_i \le \Delta$ for all $i$—can be evaluated in polynomial time. This result is subsequently extended to the \emph{nearly $\Delta$-bounded} case, where the condition $a_n > \Delta$ is permitted. Section 4 details the polynomial-time algorithm for $B$-bounded A-L denumerants, achieved by transforming them into subproblems involving nearly $\Delta$-bounded denumerants. Section 5 proposes a refined polynomial-time algorithm for $\Delta$-modular A-L denumerants based on the matrix-form operations established in \cite{XXZ25}, and explores the minimum $\Delta$ for which $\mathbf{a}$ is a $\Delta$-modular A-L sequence. Finally, Section 6 offers concluding remarks.

\section{Preliminary}

In this section, we review existing approaches to the knapsack problem and outline two key tools central to our methodology, both developed by Xin et al.~\cite{X04,X15,XXZ25,XZ25,XZZ25}: an efficient algorithm for computing generalized Todd polynomials and the constant term method.

\subsection{Computation scheme}\label{subsec-scheme}
Consider the polytope $P = P(b;\a) = \{\mathbf{x} \in \R^{n} : \a\mathbf{x} = b, \mathbf{x} \geq \mathbf{0}\}$, where $\a = (a_{1}, a_{2}, \dots, a_{n})$ is a sequence of positive integers and $b \in \Z_{+}$. The primary objective is to evaluate the associated counting function $\sigma_P(\mathbf{1})$.

Counting lattice points within a rational convex polytope $P \subset \mathbb{R}^n$ is a fundamental problem in computational geometry. Efficient algorithms for this task (e.g., \texttt{LattE} \cite{DeLoeraHemmeckeTauzerYoshida2004}) typically proceed in two major steps.

The first major step is to obtain a rational encoding of the lattice point generating polynomial $\sigma_P(\y)$.
In computational geometry, this is usually done through several steps to write $\sigma_P(\y)$ in the form
$$ \sigma_P(\y)=\sum_{\alpha\in P\cap \Z^n}\y^{\alpha}   = \sum_{i\in I}\epsilon_{i}\frac{\y^{p_{i}}}{(1-\y^{a_{i1}})(1-\y^{a_{i2}})\cdots(1-\y^{a_{in}})} ,$$
where $\epsilon_{i}\in\{-1,+1\}$ and $|I|$ can be bounded by a polynomial in the input size when the dimension of $P$ is fixed.

The second primary step involves computing the limit of $\sigma_P(\y)$ as $y_i \to 1$ for all $i$. This process requires substituting $y_i=e^{\kappa_i s}$ using an \emph{admissible} integral vector $\kappa$, and subsequently extracting the constant term. By \emph{admissible}, it is meant that the substitution prevents any denominator from evaluating to zero; as demonstrated in \cite{BP99}, such a vector can be found in polynomial time. This step can be treated as an independent algebra computation, called the Gtodd computation, and will be elaborated in the next subsection.

Our methodology builds upon Xin's constant term approach in algebraic combinatorics. For the polytope $P=P(b;\a)$, we express $\sigma_P(\y)$ as a related yet distinct sum of simple rational functions, characterized by two key features: (i) the denominator factors may involve roots of unity; (ii) the number of summands is significantly smaller for certain special sequences $\a$. The second feature makes our representation more efficient for applications in integer
programming, discrete optimization, and symbolic summation
\cite{DeLoeraHemmeckeTauzerYoshida2004,DHK13}.

Because of this distinction, we need to extend a little bit about the constant term extraction, as illustrated in the next subsection.

\subsection{Generalized Todd polynomials}
In this subsection, we introduce the computation of generalized Todd polynomials using the log-exponential trick, a technique developed by Xin et al. in \cite{XZZ25}. To do this, we will always use the notation $R_{f}=\Q(\zeta)$, where $\zeta=e^{\frac{2\pi\imath}{f}}$ is a primitive $f$-th root of unity. Note that $R_{f}$ reduces to $\Q$ when $\zeta=1$. This notation also applies to cases with multiple roots of unity, since $\Q\left(e^{\frac{2\pi\imath}{f_1}}, e^{\frac{2\pi\imath}{f_2}}\right)= \Q\left(e^{\frac{2\pi\imath}{f}}\right)$, where $f$ is the least common multiple (lcm) of $f_1$ and $f_2$.

The cyclotomic field $\Q(\zeta)$ is isomorphic to $\Q[x]/\langle \Phi_f(x) \rangle$, where $\Phi_f(x)$ is the $f$-th cyclotomic polynomial. For convenience, we measure the algorithm's complexity in terms of arithmetic operations in $R_{f}$. Using the Fast Fourier Transform (FFT) \cite{V13}, each operation in $R_{f}$ takes $O\left(\phi(f)\log\left(\phi(f)\right)\right)$ operations in $\Q$, where $\phi$ is Euler's totient function.

Let $a\in \Q$, and let $B_0,\bar B_0,B_1,\bar B_1,\dots, B_r, \bar B_r$ be finite multisets of nonzero rational numbers\footnote{originally written as integers in \cite{XZZ25}, but the arguments naturally extend for rational numbers.}. Suppose their cardinalities are $k_i$ and $\bar{k}_i$, respectively, for $i = 0, \dots, r$.  The generalized Todd polynomials, hereafter referred to as the Gtodd polynomials, $$gtd_n:=gtd_n(a,B_0,\bar B_0,B_1,\bar B_1,\dots, B_r, \bar B_r)$$ are defined by their generating function
\begin{align}
  F(s)=\sum_{n\ge 0} gtd  _n s^n&=e^{as} \frac{ \prod_{b\in B_0} g(bs)}{\prod_{b\in \bar B_0} g(b s)}
  \prod_{i=1}^r  \frac{\prod_{b\in B_i} g(bs,z_i)} {\prod_{b\in \bar B_i} g(bs,z_i)},
\end{align}
where
\begin{align*}
  g(s)= \frac{s}{e^s-1}=1+o(1), \qquad  g(s,z)= \frac{1}{1-z (e^s-1)}=1+o(1).
\end{align*}

We are primarily interested in the case where $\bar B_i$ is empty and $z_i\in R_{f}$ for all $i$.
Then, the generating function simplifies to
\begin{align}
  F(s)=\sum_{n\ge 0} gtd_n s^n&=e^{as} \prod_{b\in B_0} g(bs)
  \prod_{i=1}^r  \prod_{b\in B_i} g(bs,z_i)\notag\\
  &=e^{as} \prod_{b\in B_0} \frac{bs}{e^{bs}-1}
  \prod_{i=1}^r  \prod_{b\in B_i}\frac{1}{1-z_i(e^{bs}-1)}\label{e-GTodd}.
\end{align}

The log-exponential trick consists of first computing $H(s)=\ln(F(s))$ via the expansion formulas
\begin{align*}
  h(s)=\ln g(s)=-\sum_{k\geq1}\frac{\mathcal{B}_{k}}{k\cdot k!}s^{k}, \text{\quad} h(s,z)=\ln g(s,z)=\sum_{k\geq1 }C_{k}(z)s^{k},
\end{align*}
and then evaluating $e^{H(s)}$. Here, the coefficients $\mathcal{B}_{k}$ are the classical Bernoulli numbers, and $C_{k}(z)$ are polynomials in $z$. Following this setup, Xin and Zhang \cite{XZ25} provided the following complexity result for the case where $z_{i}\in R_{f}$ for all $i$:

\begin{thm}[\cite{XZ25}]\label{theo-GTodd}
Suppose $F(s)$ is given by \eqref{e-GTodd}, where $a \in \Q$, the collections $B_i$ are finite multisets of rational numbers, and $z_i \in R_f$ for all $i$. For any given positive integer $d$, we can evaluate the first $d$ generalized Todd polynomials $(gtd_0, gtd_1, \dots, gtd_{d-1})$ using $O\left( (r+1)d\log(d) + \log^2(d)\sum_{i=0}^{r} |B_i| \right)$ operations in $R_f$, which corresponds to
$$O\left( \left( (r+1)d\log(d) + \log^2(d)\sum_{i=0}^{r} |B_i| \right) \phi(f)\log(\phi(f)) \right)$$
operations in $\Q$.
\end{thm}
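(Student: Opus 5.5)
The plan is to follow the log-exponential trick directly: compute $H(s)=\ln F(s)$ modulo $s^d$, then recover $F(s)=e^{H(s)}$ modulo $s^d$ by a fast power-series exponential. Taking logarithms turns the product \eqref{e-GTodd} into a sum:
\[
H(s)=as+\sum_{b\in B_0} h(bs)+\sum_{i=1}^r\sum_{b\in B_i} h(bs,z_i).
\]
Since $h(bs)=\sum_k -\frac{\mathcal{B}_k}{k\cdot k!}b^k s^k$, the contribution of $B_0$ is
\[
\sum_{k} -\frac{\mathcal{B}_k}{k\cdot k!}\, p_k(B_0)\, s^k,
\qquad p_k(B)=\sum_{b\in B} b^k .
\]
In the same way, the contribution of $B_i$ is $\sum_k C_k(z_i)\,p_k(B_i)\,s^k$.

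The work therefore splits into three tasks.

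\textbf{Step 1: coefficients.} Compute the $d$ Bernoulli numbers and the $d$ values $C_k(z_i)$.
\begin{itemize}
\item The Bernoulli numbers come from inverting $(e^s-1)/s$ as a power series, which costs $O(d\log d)$.
\item For each $i$, the series $h(s,z_i)=-\ln\bigl(1-z_i(e^s-1)\bigr)$ is obtained by a series logarithm with $z_i\in R_f$ plugged in. This is an inversion plus a multiplication, costing $O(d\log d)$ operations in $R_f$ per $i$, so $O(rd\log d)$ in total.
\end{itemize}

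\textbf{Step 2: power sums.} Compute the power sums $p_k(B_i)$ for $k<d$. The generating series is
\[
\sum_k p_k(B)s^k=\sum_{b\in B}\frac{1}{1-bs}.
\]
Its numerator and denominator come from a product tree of the linear factors $1-bs$, and a rational summation then gives the series to order $d$. Equivalently, take the logarithmic derivative of $\prod_{b}(1-bs)$. Building the product tree costs $O(|B_i|\log^2|B_i|)$. If $|B_i|$ exceeds $d$, truncate the tree at degree $d$, which gives roughly $O(|B_i|\log^2 d)$ plus an $O(d\log d)$ expansion term. These two regimes together produce the $\log^2(d)\sum|B_i|$ term.

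\textbf{Step 3: assemble and exponentiate.} Form the $r+1$ Hadamard-type products termwise in $O((r+1)d)$ and add them. Then compute $e^{H(s)}$ modulo $s^d$ by Newton iteration in $O(d\log d)$. Every operation in Steps 1 to 3 is an arithmetic operation in $R_f$, because all $z_i$ lie there and the remaining data are rational. Each operation in $R_f\cong\Q[x]/\langle\Phi_f\rangle$ costs $O(\phi(f)\log\phi(f))$ operations in $\Q$ via FFT, which converts the bound to operations in $\Q$.

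\textbf{Main obstacle.} I expect the hardest part to be keeping the power-sum computation within the stated bound uniformly, whether $|B_i|$ is small or large compared with $d$. Getting the $\log^2 d$ factor, rather than $\log^2|B_i|$, requires truncating the product tree carefully at degree $d$, or first merging elements of $B_i$ in blocks of size $d$. A second point to check is that fast multiplication over $R_f$ is legitimate. Rationals are invertible and $R_f$ is a field, so both the Newton iterations and the divisions by $k\cdot k!$ are valid.
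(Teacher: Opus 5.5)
Your proposal is correct and is essentially the log-exponential trick that the paper sketches and attributes to \cite{XZ25}. You write $\ln F(s)$ as $as$ plus the power sums $p_k(B_i)$, weighted by $-\mathcal{B}_k/(k\cdot k!)$ and by $C_k(z_i)$, and then recover $F=e^{H}$ by Newton iteration. The blockwise (or degree-$d$-truncated) product-tree computation of the power sums accounts for the $\log^2(d)\sum_i|B_i|$ term, while the $r+1$ series logarithms and the final exponential account for the $(r+1)d\log d$ term.
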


\subsection{ Brief introduction to the XinPF algorithm}

In this subsection, we briefly review Xin's partial fraction algorithm (XinPF).
For a nonzero rational function $F(\lambda)$, $\deg_\lambda F(\lambda)$ denotes the degree of its numerator minus the degree of its denominator.
$F(\lambda)$ is termed \emph{proper} if $\deg_\lambda F(\lambda) < 0$.
The variable order $\texttt{vars}=[y_1,\dots, y_n]$ defines our working field of iterated Laurent series \cite{X04}:
\begin{align}
G=\mathbb{C}((y_n))((y_{n-1}))\cdots ((y_1)).\label{Ite-Laurent}
\end{align}
A monomial $M=\zeta \y^\alpha$ (where $\zeta \in \C$ is a root of unity and $\alpha \in \Z^n$ with its first nonzero entry $\alpha_j$) is \emph{small} ($M<1$) if $\alpha_j >0$, and \emph{large} ($M>1$) if $\alpha_j <0$.
In $G$, we utilize the unique series expansions:
\begin{align} \label{issmall}
  \frac{1}{1-M} =\left\{
                   \begin{array}{ll}
                    \displaystyle \sum_{k\ge 0} M^k, & \text{ if } M<1; \\
                    \displaystyle \frac{1}{-M(1-1/M)}=  \sum_{k\ge 0} - \frac{1}{M^{k+1}}, & \text{ if } M>1.
                   \end{array}
                 \right.
\end{align}

Now we work in the field $G$ specified by the list $\texttt{vars}=[\y,\Lambda]=[y_1,y_2,\dots, y_n,\lambda_1,\dots, \lambda_r]$.
For any $F$ in $G$, define the constant term operator by
$$\CT_\Lambda F = \CT_\Lambda \sum_{} f_{i_1,\dots,i_r} \lambda_1^{i_1} \cdots \lambda_r^{i_r}  = f_{0,\dots,0}.$$
Since the individual constant term operators $\CT\limits_{\lambda_i}$ commute with one another, $\CT\limits_\Lambda$ unambiguously extracts the constant term with respect to the entire set of variables $\Lambda$.

To be precise, we first write $E$ as a standard rational function in $\lambda$:
\begin{align}\label{e-positive-form}
E= \frac{L(\lambda)}{\prod_{i=1}^n (1-u_i \lambda^{a_i})},
\end{align}
where $L(\lambda)$ is a Laurent polynomial, $u_i$ are independent of $\lambda$ and $a_i$ are positive integers for all $i$.

\begin{prop}[\cite{X15}]\label{p-partialfraction}
Assume $E$ admits a partial fraction decomposition
\begin{align*}
E= P(\lambda)+ \frac{p(\lambda)}{\lambda^k} +\sum_{i=1}^n \frac{A_i(\lambda)}{1-u_i \lambda^{a_i}},
\end{align*}
satisfying standard degree constraints.
Then $A_{i}(\lambda)$ is uniquely characterized by $A_{i}(\lambda) \equiv E(\lambda)(1-u_{i}\lambda^{a_{i}}) \pmod{\langle 1-u_{i}\lambda^{a_{i}}\rangle}$ with $\deg A_{i}<a_{i}$.
If $E$ is proper in $\lambda$, then
\begin{equation}\label{equ-proper}
\CT_\lambda E = \sum_{u_i \lambda^{a_i} <1} A_i(0).
\end{equation}
Moreover, if $\deg_\lambda E < 0$ and $E|_{\lambda=0}=0$, we deduce that
\begin{equation}\label{equ-properandlim=0}
A_{1}(0)+A_{2}(0)+\dots+A_{n}(0)=0.
\end{equation}
\end{prop}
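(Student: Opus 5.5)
The plan is to prove Proposition~\ref{p-partialfraction} in three stages: uniqueness of the $A_i$, the constant-term formula \eqref{equ-proper}, and the vanishing identity \eqref{equ-properandlim=0}. We assume, as the ``standard degree constraints'' indicate, that $P$ is a polynomial in $\lambda$ with $P(0)=0$, that $p$ is a polynomial with $\deg p<k$, and that $\deg A_i<a_i$. We also take the factors $1-u_i\lambda^{a_i}$ pairwise coprime (distinct poles); the general case with repeated factors is handled by grouping powers of the same factor and replacing the congruence by its multiplicity-adjusted analogue.

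\emph{Uniqueness and the congruence.} Multiply the decomposition by $1-u_i\lambda^{a_i}$. Every term except $A_i(\lambda)$ becomes a Laurent polynomial multiple of $1-u_i\lambda^{a_i}$. For $j\ne i$ this uses that $1-u_j\lambda^{a_j}$ is invertible modulo $1-u_i\lambda^{a_i}$, by coprimality. Reducing modulo $\langle 1-u_i\lambda^{a_i}\rangle$ in the Laurent polynomial ring, where $\lambda$ is a unit, gives $A_i\equiv E(1-u_i\lambda^{a_i})$. Since $\deg A_i<a_i$ and $1-u_i\lambda^{a_i}$ has degree $a_i$ and nonzero constant term, this residue is unique. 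Existence follows from the Chinese remainder theorem applied to $L(\lambda)/\prod_i(1-u_i\lambda^{a_i})$ after clearing the power of $\lambda$ in $L$.

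\emph{Constant term.} We take $\CT_\lambda$ of each term, expanding in $G$ via \eqref{issmall}.
\begin{itemize}
\item $\CT_\lambda P=P(0)$, and $\CT_\lambda p(\lambda)/\lambda^k=0$ because $\deg p<k$.
\item If $u_i\lambda^{a_i}<1$, then $A_i(\lambda)\sum_{m\ge0}u_i^m\lambda^{a_im}$ has constant term $A_i(0)$, since $0\le\deg A_i<a_i$ means only $m=0$ can contribute.
\item If $u_i\lambda^{a_i}>1$, the expansion is $-A_i(\lambda)\sum_{m\ge0}u_i^{-m-1}\lambda^{-a_i(m+1)}$. All exponents are at most $\deg A_i-a_i<0$, so the constant term is $0$.
\end{itemize}
It remains to show $P=0$ when $E$ is proper. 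As $\lambda\to\infty$ we have $p/\lambda^k\to0$ and each $A_i/(1-u_i\lambda^{a_i})\to0$ because $\deg A_i<a_i$, while $E\to0$. Hence $P\to 0$, which forces $P=0$. This gives \eqref{equ-proper}.

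\emph{Vanishing identity.} Apply the same decomposition but extract the constant term in the opposite direction, treating every $u_i\lambda^{a_i}$ as large; equivalently, substitute $\lambda\mapsto1/\lambda$. Comparing with the small-expansion computation, summing over all $i$ gives $\sum_i A_i(0)$ minus the $\lambda^0$ contributions of $P$ and $p/\lambda^k$. The cleanest route is to evaluate the decomposition at $\lambda=0$, where $E|_{\lambda=0}$ makes sense. If $E$ is regular at $0$ we may take $p=0$, so $0=E(0)=P(0)+\sum_iA_i(0)$, with $P=0$ by properness. This yields \eqref{equ-properandlim=0}.

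The main obstacle is pinning down precisely which degree constraints make $p$ vanish here. We must argue that regularity of $E$ at $0$ forces the $\lambda^{-k}$ part to be zero by uniqueness of the partial fraction decomposition, since the pole at $0$ is coprime to all other denominators.
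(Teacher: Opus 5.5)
The paper gives no proof of this proposition; it quotes it from \cite{X15}. Your argument is correct and is the standard one: residues modulo $1-u_i\lambda^{a_i}$ determine the $A_i$, the expansions \eqref{issmall} give the constant terms term by term, properness forces $P=0$, and evaluating at $\lambda=0$ gives \eqref{equ-properandlim=0}. Your assumption that the factors are pairwise coprime is exactly what the statement tacitly requires.

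There are two small corrections.

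\textbf{The normalization $P(0)=0$.} Drop it. With $\deg p<k$, the constant term of the Laurent-polynomial part has to live in $P$. For example, $\lambda/(1-u\lambda)=-1/u+(1/u)/(1-u\lambda)$ has no decomposition under your normalization. You never actually use $P(0)=0$, and $P=0$ anyway in the proper case, so nothing in the proof breaks.

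\textbf{The ``obstacle'' at the end.} This is a one-line argument. $P$ and every $A_i/(1-u_i\lambda^{a_i})$ are regular at $\lambda=0$, since each denominator equals $1$ there. So if $E$ is regular at $0$, then $p(\lambda)/\lambda^k$ is also regular at $0$, and together with $\deg p<k$ this forces $p=0$. The half-finished ``opposite direction'' comparison before it can simply be deleted.
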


\begin{dfn}[\cite{XXZ25}]\label{defn-A}Let $E$ be defined as above. We introduce the notation
$$\CT_\lambda \frac{1}{\underline{1-u_s \lambda^{a_s}}} \cdot E(1-u_s \lambda^{a_s})=\CT_\lambda \frac{1}{\underline{1-(u_s \lambda^{a_s})^{-1}}}\cdot    E(1-(u_s \lambda^{a_s})^{-1}) :=A_s(0),$$
stipulating that this term is $0$ whenever $a_s=0$. This operation extends additively over products of denominator factors.
\end{dfn}

By allowing fractional powers and roots of unity, this extraction admits the following explicit formulation.

\begin{lem}[\cite{XXZ25}]\label{lem-unit-E}
Suppose $E$ in \eqref{e-positive-form} is rewritten such that $b_i=\pm a_i$ for all $i$. Then for any nonzero integer $b_s$,
\begin{equation}\label{equ-addifomula-A}
\CT_\lambda \frac{1}{\underline{1-u_s \lambda^{b_s}}}\cdot E (1-u_s \lambda^{b_s})=\frac{1}{b_s}\sum_{\zeta: \zeta^{b_s}=1}E (1-u_s \lambda^{b_s}) \Big|_{\lambda=({u_j}^{\frac{1}{b_s}}\zeta)^{-1}}.
\end{equation}
\end{lem}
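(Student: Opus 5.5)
The plan is to reduce Lemma~\ref{lem-unit-E} to Proposition~\ref{p-partialfraction} plus a discrete Fourier (root-of-unity filter) computation.

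\textbf{Positive exponent.} Suppose first that $b_s=a_s>0$. By Definition~\ref{defn-A}, the left-hand side is $A_s(0)$. Here $A_s(\lambda)$ is the unique polynomial of degree $<a_s$ with
\[
A_s(\lambda)\equiv E(\lambda)(1-u_s\lambda^{a_s}) \pmod{\langle 1-u_s\lambda^{a_s}\rangle}.
\]
Put $G(\lambda)=E(\lambda)(1-u_s\lambda^{a_s})$. This is a rational function whose remaining denominator factors $1-u_i\lambda^{a_i}$, for $i\ne s$, are generically nonzero at the roots of $1-u_s\lambda^{a_s}$.

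The roots of $1-u_s\lambda^{a_s}$ are exactly $\lambda_\zeta=(u_s^{1/a_s}\zeta)^{-1}$, where $\zeta$ runs over the $a_s$-th roots of unity. These roots are distinct, so the congruence simply says $A_s(\lambda_\zeta)=G(\lambda_\zeta)$ for every $\zeta$. In other words, $A_s$ is the Lagrange interpolant of $G$ at these $a_s$ nodes.

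\textbf{Extracting the constant term.} Write $A_s(\lambda)=\sum_{k=0}^{a_s-1}c_k\lambda^k$. Then
\[
\sum_{\zeta^{a_s}=1}A_s(\lambda_\zeta)=\sum_k c_k\,u_s^{-k/a_s}\sum_{\zeta}\zeta^{-k}=a_s c_0,
\]
because $\sum_\zeta \zeta^{-k}=0$ for $0<k<a_s$. Hence
\[
A_s(0)=c_0=\frac1{a_s}\sum_\zeta G(\lambda_\zeta),
\]
which is \eqref{equ-addifomula-A}.

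\textbf{Negative exponent.} If $b_s=-a_s<0$, then $1-u_s\lambda^{b_s}=1-(u_s^{-1}\lambda^{a_s})^{-1}$. Definition~\ref{defn-A} stipulates that the underlined term for this factor is the same $A_s(0)$, attached to the factor $1-u_s^{-1}\lambda^{a_s}$.

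We must track how $E(1-u_s\lambda^{b_s})$ relates to $E(1-u_s^{-1}\lambda^{a_s})$. One has
\[
1-u_s\lambda^{-a_s}=-u_s\lambda^{-a_s}\,(1-u_s^{-1}\lambda^{a_s}),
\]
so
\[
E\cdot(1-u_s\lambda^{-a_s})=-u_s\lambda^{-a_s}\,E\cdot(1-u_s^{-1}\lambda^{a_s}).
\]
At the nodes $\lambda=(u_s^{1/b_s}\zeta)^{-1}$ we have $\lambda^{a_s}=u_s$, so the prefactor $-u_s\lambda^{-a_s}$ equals $-1$. The $-1$ is absorbed by the factor $1/b_s=-1/a_s$, so the sign works out. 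Also, the set $\{\zeta:\zeta^{b_s}=1\}$ coincides with $\{\zeta:\zeta^{a_s}=1\}$. Applying the positive case to $E(1-u_s^{-1}\lambda^{a_s})$ then gives \eqref{equ-addifomula-A} for negative $b_s$.

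\textbf{Expected obstacle.} The main care is not in the algebra but in the choice of branch $u_s^{1/b_s}$. Any fixed choice works, because changing branch only permutes the set of $\zeta$.

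The other point needing care is the generic assumption that the nodes are not also poles of other factors. Distinctness of the nodes, and hence the interpolation argument, is automatic. Non-collision with other factors is the standing hypothesis under which Proposition~\ref{p-partialfraction} applies, with the $u_i$ independent of $\lambda$ (for instance, monomials in other variables), so I would invoke it there.
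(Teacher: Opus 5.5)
Your proof is correct and complete. Identifying $A_s$ as the interpolant of $E(1-u_s\lambda^{a_s})$ at the $a_s$ distinct roots of $1-u_s\lambda^{a_s}$, then applying the root-of-unity filter, settles $b_s>0$. For $b_s<0$, the factorization $1-u_s\lambda^{-a_s}=-u_s\lambda^{-a_s}(1-u_s^{-1}\lambda^{a_s})$ has prefactor $-1$ at the same nodes, and this sign is absorbed by $1/b_s$. The paper gives no proof of its own; it imports the lemma from \cite{XXZ25}, and your argument is the natural root-of-unity-filter derivation of it.
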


\section{Polynomial-time algorithm for nearly $\Delta$ bounded denumerants}\label{sec-3}
In this section, we first present a new proof that $d(b;\a)$ can be computed in polynomial time when $\a$ is a \emph{$\Delta$-bounded sequence}, meaning $a_i \leq \Delta$ for all $i$. We then extend this result to a \emph{nearly $\Delta$-bounded sequence}, where we allow exactly one entry, say $a_n$, to be greater than $\Delta$. The corresponding denumerant is referred to as a \emph{(nearly) $\Delta$-bounded denumerant}. We will use this new result in the next section for $B$-bounded A-L denumerants.

The following result was first established in \cite{XZ25} using residue computation with a tighter complexity bound. An alternative proof can be found in \cite{GribanovZolotykh2022}, though it requires a higher complexity. In contrast, we offer a simplified constant-term approach, which leads to a coarser complexity bound.

\begin{prop}\label{prop-polymialwhenbounded}
Let $\a=(a_1,\dots,a_n)$ be a $\Delta$-bounded sequence of positive integers with $\gcd(\a)=1$. Then for any nonnegative integer $b$, the denumerant $d(b;\a)$ can be computed using $O(n^3 \Delta^2 \log n \log \Delta)$ operations in $\Q$.
\end{prop}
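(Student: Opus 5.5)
We start from the constant-term formula \eqref{equ-denumerant},
\[
d(b;\a)=\CT_q \frac{q^{-b}}{\prod_{i=1}^n(1-q^{a_i})},
\]
and use the fact that $E(q)=q^{-b}/\prod_i(1-q^{a_i})$ is a rational function whose poles, other than $0$ and $\infty$, lie at roots of unity of order at most $\Delta$.

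\textbf{Step 1: reduce to the poles.} First assume $b$ is large, say $b\ge \sum_i a_i$, so that $q^{b}\cdot$ (everything) behaves well. Then we switch the roles and work with $E(q^{-1})$, or equivalently use Proposition~\ref{p-partialfraction} after writing the expression as a proper function of $\lambda=q$. The constant term $\CT_q E$ equals, up to sign, the sum over the "large'' factors, i.e. over all $1-q^{a_i}$ when they are rewritten as $-q^{a_i}(1-q^{-a_i})$, of the quantities $A_i(0)$ in Definition~\ref{defn-A}. By \eqref{equ-properandlim=0}, these can be traded for the small ones.

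To avoid double counting, we do not sum over factors. Instead we group by distinct poles: for each $f\le \Delta$ and each primitive $f$-th root of unity $\zeta$, the pole $q=\zeta^{-1}$ has multiplicity $m_f=\#\{i: f\mid a_i\}$. Concretely, Lemma~\ref{lem-unit-E} shows that the contribution is a sum over roots of unity of a residue-type evaluation. We then reorganize it as
\[
d(b;\a)=\sum_{f\le \Delta}\ \sum_{\zeta\ \text{primitive } f\text{-th}} \operatorname{Res}_{q=\zeta}(\cdots),
\]
up to sign. Only $f$ dividing some $a_i$ contribute, and there are at most $\sum_{f\le\Delta}\phi(f)=O(\Delta^2)$ poles in total.

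For small $b$ the same identity holds after adding the easy boundary correction at $q=0$, or we simply note that the formula is a quasi-polynomial valid for all $b\ge0$. Since $E$ is proper once $b$ is large, the Ehrhart quasipolynomial extends to all $b$ by reciprocity; more carefully, one checks that the contribution at $0$/$\infty$ vanishes when $b\ge0$.

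\textbf{Step 2: each pole becomes a Gtodd computation.} At a pole $\zeta$ of order $m=m_f\le n$, substitute $q=\zeta^{-1}e^{s}$, or rather $q=\zeta^{-1}e^{-s}$. Each factor with $f\mid a_i$ becomes $1-e^{a_is}=-a_is/g(a_is)$. Each factor with $f\nmid a_i$ becomes $1-\zeta^{-a_i}e^{a_is}$, which equals $(1-\zeta^{-a_i})\bigl(1-z_i(e^{a_is}-1)\bigr)$ with $z_i=\zeta^{-a_i}/(1-\zeta^{-a_i})\in R_f$. The residue is therefore the coefficient of $s^{m-1}$ in
\[
\frac{\zeta^{b}e^{-bs}}{\prod(-a_i)\prod(1-\zeta^{-a_i})}\prod g(a_is)\prod g(a_is,z_i).
\]
This is exactly of the form \eqref{e-GTodd} with $d=m\le n$.

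By Theorem~\ref{theo-GTodd}, with the $n-m$ factors grouped under distinct $z_i$ (we take $r\le n$), one pole costs $O\bigl(n\cdot n\log n+n\log^2 n\bigr)=O(n^2\log n)$ operations in $R_f$. That is $O(n^2\log n\,\phi(f)\log\phi(f))$ operations in $\Q$.

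\textbf{Step 3: cost of summing over the conjugates.} The main obstacle is the sum over all $\phi(f)$ conjugates $\zeta$ for each $f$. Doing every conjugate separately gives $\sum_f \phi(f)\cdot n^2\log n\,\phi(f)\log\Delta$, which is $O(n^2\Delta^3\log n\log\Delta)$ and is too big by a factor of $\Delta/n$.

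I would avoid this by computing only one representative per $f$, using a symbolic $\zeta$ in $R_f$, and then taking the field trace $\mathrm{Tr}_{R_f/\Q}$. The trace costs $O(\phi(f))$ once the Gtodd coefficient is known as an element of $\Q[x]/\Phi_f$. This gives $\sum_{f\le\Delta}O(n^2\log n\,\phi(f)\log\Delta)=O(n^2\Delta^2\log n\log\Delta)$, comfortably within the claimed $O(n^3\Delta^2\log n\log\Delta)$.

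If the trace is awkward because $z_i$ depends on $\zeta$ non-Galois-equivariantly, which it does not since everything is a rational expression in $\zeta$, the extra factor $n$ in the stated bound leaves room. The alternative is to handle $f$ by iterating over the $\le n$ distinct values $a_i$ rather than over conjugates.

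Finally, the pole $q=1$ ($f=1$, $m=n$) is handled in the same way over $\Q$, and $\gcd(\a)=1$ ensures that no other pole has order $n$.
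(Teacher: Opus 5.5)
Your proof is correct, but it takes a different route from the paper's.

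\textbf{Your route.} You set $y_i=1$ from the start and write $d(b;\a)=-\sum_{\zeta}\operatorname{Res}_{q=\zeta}\, q^{-b-1}/\prod_i(1-q^{a_i})$, summing over the distinct poles. Each pole, of multiplicity $m_f$, becomes a Gtodd coefficient via $q=\zeta e^{s}$. Each order $f$ is then handled once, through $\mathrm{Tr}_{R_f/\Q}$. This is the residue-type approach the paper attributes to \cite{XZ25}.

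\textbf{The paper's route.} The paper keeps the variables $y_i$ so that the factors $1-q^{a_i}y_i$ are pairwise coprime. This is exactly what Proposition~\ref{p-partialfraction} and Lemma~\ref{lem-unit-E} need, and those results do not apply at $y_i=1$. It obtains $\sum_i a_i\le n\Delta$ rational functions in $\y$, each with only simple poles in $q$ and one $a_i$-th root of unity. Only then does it specialize $y_i=e^{\kappa_i s}$ with an admissible $\kappa$, reading off $gtd_\tau$ in $R_{a_i}$ at cost $O(n^2\Delta\log n\log\Delta)$ per term.

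\textbf{What each buys.} The paper's route avoids any multiple-pole analysis in $q$; multiplicity only appears in $s$. More importantly, it is the form that extends to Theorem~\ref{thm-onlyonetermbig} and to the A-L reduction, where one needs $\sigma_P(\y)$ as a rational function and the vanishing-sum identity \eqref{equ-properandlim=0}. Your route gives a sharper bound, $O(n^2\Delta^2\log n\log\Delta)$, which the paper explicitly gives up in exchange for simplicity.

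\textbf{Step 1 can be simplified.} Since $q^{-b-1}/\prod_i(1-q^{a_i})$ has degree at most $-2$ for every $b\ge 0$, the residue at $\infty$ always vanishes. The identity therefore holds directly for all $b\ge0$, and the large-$b$ assumption and the appeal to reciprocity are unnecessary. Note also that it is the contribution at $\infty$ that vanishes; the one at $0$ is the quantity you want.

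\textbf{The Step 3 obstacle does not arise.} The poles are the elements of $\bigcup_i\{\zeta:\zeta^{a_i}=1\}$, so there are at most $\sum_i a_i\le n\Delta$ of them, not merely $O(\Delta^2)$. Processing every conjugate separately therefore costs $O\bigl(n\Delta\cdot n^2\log n\,\Delta\log\Delta\bigr)$. This is exactly the stated bound, and it is the same count the paper uses.

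\textbf{The trace step is valid.} Every quantity is a $\Q$-rational expression in $\zeta$, so the residue at $\zeta^k$ is the Galois conjugate of the residue at $\zeta$. The trace is thus a genuine improvement, though it is not needed for the stated bound.
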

\begin{proof}
We start with
\begin{align*}
\sigma_{P(b;\a)}=\CT_{q}\frac{q^{-b}}{(1-q^{a_{1}}y_{1})\cdots(1-q^{a_{n}}y_{n})}, \quad d(b;\a)=\sigma_{P(b;\a)}(\mathbf{1}).
\end{align*}
By Proposition \ref{p-partialfraction}, Definition \ref{defn-A} and Lemma \ref{lem-unit-E}, we obtain
\begin{align*}
\sigma_{P(b;\a)}=&\sum_{i=1}^{n}\CT_{q}\frac{1}{\underline{1-q^{a_{i}}y_{i}}}\frac{q^{-b}}{\prod_{j\neq i}(1-q^{a_{j}}y_{j})}
=&\sum_{i=1}^{n}\left(\frac{1}{a_{i}}\sum_{\zeta:\zeta^{a_{i}}=1}\left.\frac{q^{-b}}{\prod_{j\neq i}(1-q^{a_{j}}y_{j})}\right|_{q=(y_{i}^{\frac{1}{a_{i}}}\zeta)^{-1}}\right).
\end{align*}
This yields a sum of $a_1+\cdots +a_n\leq n\Delta$ rational functions in $y_{i}$, which involve various roots of unity $\zeta$.

Following the computation scheme in Subsection \ref{subsec-scheme}, we first find an admissible vector $\kappa$ and then compute
$$ \CT_s \sigma_{P(b;\a)}\left(e^{\kappa_1s},\dots,e^{\kappa_ns}\right) = \sum \CT_s Q_{i,\zeta}, $$
where the sum runs over all pairs $(i,\zeta)$ such that $\zeta$ is an $a_i$-th root of unity.

A particular constant term is computed as follows.
\begin{align*}
  \CT_s Q_{i,\zeta}&=\CT_s \frac{1}{a_{i}}\frac{e^{-\frac{b\kappa_{i}}{a_{i}}s}\zeta^{-b}}{\prod_{j\neq i}\left(1-e^{\left(-\frac{a_{j}}{a_{i}}\kappa_{i}+\kappa_{j}\right)s}\zeta^{a_{j}}\right)}\\
  &=\CT_s \frac{e^{-\frac{b\kappa_{i}}{a_{i}}s}\zeta^{-b}}{a_{i}}\prod\limits_{j\neq i,\zeta^{a_{j}}=1}\frac{1}{1-e^{\kappa_{i,j}s}}\prod\limits_{j\neq i,\zeta^{a_{j}}\neq1}\frac{1}{1-e^{\kappa_{i,j}s}\zeta^{a_{j}}}\\
  &=\frac{\zeta^{-b}}{a_{i}\prod\limits_{j\neq i,\zeta^{a_{j}}=1}(-\kappa_{i,j})\prod\limits_{j\neq i,\zeta^{a_{j}}\neq1}(1-\zeta^{a_{j}})}\cdot \CT_s s^{-\tau}F(s)\\
  &=\frac{\zeta^{-b}}{a_{i}\prod\limits_{j\neq i,\zeta^{a_{j}}=1}(-\kappa_{i,j})\prod\limits_{j\neq i,\zeta^{a_{j}}\neq1}(1-\zeta^{a_{j}})}\cdot gtd_{\tau} ,
\end{align*}
where $F(s)$ is a Gtodd generating function explicitly given by
\begin{align}
F(s)=  e^{-\frac{b\kappa_{i}}{a_{i}}s}\prod\limits_{j\neq i,\zeta^{a_{j}}=1}\frac{\kappa_{i,j}s}{e^{\kappa_{i,j}s}-1}\prod\limits_{j\neq i,\zeta^{a_{j}}\neq1}\frac{1}{1-\frac{\zeta^{a_{j}}}{1-\zeta^{a_j}}\left(e^{\kappa_{i,j}s}-1\right)}
\end{align}
with $\kappa_{i,j}:=-\frac{a_{j}}{a_{i}}\kappa_{i}+\kappa_{j}$ and $\tau:=\#\{1\leq j\leq n:j\neq i,\zeta^{a_j}=1\}$.

Note that $\frac{\zeta^{a_{j}}}{1-\zeta^{a_j}}\in \Q(\zeta)\subseteq R_{a_{i}}$.
Next, we apply Theorem \ref{theo-GTodd} with $f=a_{i}, d=\tau$, together with the inequalities $d+r\leq n-1, d\leq n-1$, and $\sum_{i=0}^{r}|B_{i}|=n-1$. Since $(r+1)d\log(d)\leq (n-d)d\log(d)\leq \frac{1}{4}n^{2}\log(n-1)$ and $\phi(a_{i})\leq a_{i}\leq \Delta$, evaluating a single term takes at most
$$O\left(\left(\frac{1}{4}n^{2}\log(n-1)+(n-1)\log^{2}(n-1)\right)\Delta\log \Delta\right)=O(n^2 \Delta\log n\log \Delta)$$ operations in $\Q$.
The proposition then follows since there are at most $n\Delta$ terms.

This completes the proof.
\end{proof}

The preceding theorem shows that a fast algorithm exists when the entire sequence is bounded. Using basic constant term techniques, we can extend this result to the case where all elements of the sequence $\a$, except for one, are bounded.

\begin{thm}\label{thm-onlyonetermbig}
Let $\a=(a_1,\dots,a_n)$ be a nearly $\Delta$-bounded sequence of positive integers with $\gcd(\a)=1$, assuming that the first $n-1$ terms of the sequence are bounded by $\Delta$, then for any nonnegative integer $b$,
the denumerant $d(b;\a)$ can be computed using $O(n^3 \Delta^2 \log n \log \Delta)$ operations in $\Q$.
\end{thm}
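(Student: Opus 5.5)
The plan is to avoid partial fractions with respect to the large factor $1-q^{a_n}y_n$ altogether, since that factor alone would produce $a_n$ roots of unity. Instead, eliminate $x_n$ by summation. With $\a'=(a_1,\dots,a_{n-1})$ and $m=\lfloor b/a_n\rfloor$,
\[
d(b;\a)=\sum_{x_n=0}^{m} d(b-a_nx_n;\a'), \qquad
\sigma_{P(b;\a)}=\sum_{x_n=0}^{m} y_n^{x_n}\,\CT_q\frac{q^{-(b-a_nx_n)}}{\prod_{j<n}(1-q^{a_j}y_j)} .
\]
Each inner constant term has only the bounded exponents $a_1,\dots,a_{n-1}\le\Delta$.

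First I will apply Proposition \ref{p-partialfraction}, Definition \ref{defn-A} and Lemma \ref{lem-unit-E} to each inner constant term, exactly as in the proof of Proposition \ref{prop-polymialwhenbounded}. This works uniformly in the exponent $c=b-a_nx_n\ge 0$, because $q^{-c}/\prod_{j<n}(1-q^{a_j}y_j)$ is proper. It also does not matter that $\gcd(\a')$ may exceed $1$, since the roots-of-unity formula never used $\gcd=1$. The result expresses the inner term as a sum over the at most $(n-1)\Delta$ pairs $(i,\zeta)$ with $i<n$ and $\zeta^{a_i}=1$ of
\[
\frac{1}{a_i}\,\frac{(y_i^{1/a_i}\zeta)^{c}}{\prod_{j\ne i,\,j<n}\bigl(1-(y_i^{1/a_i}\zeta)^{-a_j}y_j\bigr)} .
\]
The dependence on $x_n$ sits entirely in the monomial $(y_i^{1/a_i}\zeta)^{b-a_nx_n}$. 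I will interchange the two finite sums and carry out the summation over $x_n$ first, for each fixed $(i,\zeta)$. It is a finite geometric sum in $w_i:=y_n\,(y_i^{1/a_i}\zeta)^{-a_n}$, namely
\[
(y_i^{1/a_i}\zeta)^{b}\sum_{x_n=0}^{m}w_i^{x_n}=(y_i^{1/a_i}\zeta)^{b}\,\frac{1-w_i^{m+1}}{1-w_i}.
\]

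Next I will follow the computation scheme of Subsection \ref{subsec-scheme}. I choose an admissible $\kappa$, which now must also keep $w_i\ne 1$, i.e. make $\kappa_n-\frac{a_n}{a_i}\kappa_i$ nonzero whenever $\zeta^{a_n}=1$. Substituting $y_j=e^{\kappa_js}$, each $(i,\zeta)$ contributes
\[
\frac{e^{\frac{b\kappa_i}{a_i}s}\zeta^{b}\,\bigl(1-\zeta^{-a_n(m+1)}e^{(m+1)\kappa_{i,n}s}\bigr)}{a_i\,\bigl(1-\zeta^{-a_n}e^{\kappa_{i,n}s}\bigr)\prod_{j\ne i,\,j<n}\bigl(1-\zeta^{-a_j}e^{\kappa_{i,j}s}\bigr)},
\]
with $\kappa_{i,j}$ as in the proof of Proposition \ref{prop-polymialwhenbounded}, up to the sign convention. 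I will split the numerator $1-\zeta^{-a_n(m+1)}e^{(m+1)\kappa_{i,n}s}$ into its two exponential summands. Each summand has exactly the shape of the $Q_{i,\zeta}$ of the previous proof: an exponential prefactor $e^{as}$ with a possibly huge rational $a$, times $n-1$ factors of the form $1/(1-\eta e^{\beta s})$ with $\eta$ an $a_i$-th root of unity. For the $n$-th factor this holds because $\zeta^{-a_n}$ is again an $a_i$-th root of unity. As before, I separate the factors with $\eta=1$, which become $g(\beta s)$ times $1/(-\beta s)$, from those with $\eta\ne1$, which become $g(\beta s,z)$ with $z=\eta/(1-\eta)\in R_{a_i}$. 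Each constant term thus reduces to one Gtodd coefficient $gtd_\tau$ with $\tau\le n-1$.

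For the complexity, Theorem \ref{theo-GTodd} applies with $f=a_i\le\Delta$, $d\le n-1$ and $\sum|B_i|=n-1$. Each Gtodd evaluation therefore costs $O(n^2\Delta\log n\log\Delta)$ operations in $\Q$, with the same estimate as before. There are two evaluations per pair $(i,\zeta)$ and at most $(n-1)\Delta$ pairs, giving $O(n^3\Delta^2\log n\log\Delta)$ in total. The sizes of $b$ and $a_n$ enter only through the rational exponents $b\kappa_i/a_i$ and $(m+1)\kappa_{i,n}$ and through the powers $\zeta^{b}$ and $\zeta^{-a_n(m+1)}$. These are reductions modulo $a_i$ and single scalars in $e^{as}$, so they do not affect the operation count.

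The main obstacle I expect is the legitimacy of interchanging the $x_n$-sum with the partial-fraction and specialization steps. The specific concern is that the closed-form geometric sum introduces a new denominator $1-w_i$ that might vanish under the substitution. The plan is to treat every step as an identity of rational functions in $\y$: the inner formula is an exact identity for each $c\ge0$, and the geometric sum is exact. Genericity of $\kappa$ then only needs to avoid finitely many hyperplanes, now including $\kappa_n\ne\frac{a_n}{a_i}\kappa_i$, and such a $\kappa$ can still be found in polynomial time as in \cite{BP99}.
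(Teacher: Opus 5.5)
Your proof is correct, and it produces exactly the same $2(a_1+\cdots+a_{n-1})$ summands as the paper's proof. Take $c=m+1=\lfloor b/a_n\rfloor+1$. Then your term for a pair $(i,\zeta)$ is the paper's $\frac{1}{a_i}\frac{q^{-b}-q^{-b+ca_n}y_n^{c}}{\prod_{j\ne i}(1-q^{a_j}y_j)}$ evaluated at $q=(y_i^{1/a_i}\zeta)^{-1}$, because at that point the factor $1-q^{a_n}y_n$ becomes your $1-w_i$.

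The difference lies only in the justification. The paper never slices in $x_n$; it keeps the large factor throughout:
\begin{itemize}
\item It observes that the large-factor contribution of $q^{-b}/\prod_j(1-q^{a_j}y_j)$ does not change when the numerator is multiplied by $(q^{a_n}y_n)^c$, since this is $\equiv 1$ modulo $1-q^{a_n}y_n$.
\item The choice $0<-b+ca_n\le a_n$ makes $q^{-b+ca_n}y_n^c/\prod_j(1-q^{a_j}y_j)$ proper and zero at $q=0$.
\item The vanishing-sum identity \eqref{equ-properandlim=0} then trades the large-factor contribution for minus the small-factor contributions.
\end{itemize}
The two routes are the same identity read in opposite directions:
\[
\frac{q^{-b}\left(1-(q^{a_n}y_n)^{c}\right)}{\prod_{j=1}^{n}(1-q^{a_j}y_j)}=\sum_{x_n=0}^{m}y_n^{x_n}\frac{q^{-(b-a_nx_n)}}{\prod_{j<n}(1-q^{a_j}y_j)} .
\]

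Your slicing-plus-geometric-series argument is more elementary and makes the origin of the numerator $1-w_i^{m+1}$ transparent. Your worry about interchanging the sums is harmless, since everything is a finite identity of rational functions and $w_i$ is not identically $1$.

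The paper's version stays entirely inside the partial-fraction calculus. The same device (shift the numerator by a power of the large factor, then apply the vanishing-sum identity) is reused in Theorem \ref{Thm-Center}, through the shift $s_i$ and \eqref{equ-matrixproperandlim=0}.

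From the decomposition onward the two proofs coincide: the admissibility requirement on $\kappa$ (including $\kappa_{i,n}\neq 0$), the Gtodd reduction and the final cost count are identical.
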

\begin{proof}
Set $c = \lfloor b/a_{n}\rfloor+1$ so that the condition $0 < -b + c a_n \le a_n$ holds true. Then
  \begin{align*}
  \sigma_{P(b;\a)}=&\CT_{q}\frac{q^{-b}}{(1-q^{a_{n}}y_{n})\prod_{i=1}^{n-1}(1-q^{a_{i}}y_{i})}\\
  =&\sum_{i=1}^{n-1}\CT_{q}\frac{q^{-b}}{\underline{(1-q^{a_i}y_i)}\prod\limits_{j\neq i}(1-q^{a_{j}}y_{j})}
  +\CT_{q}\frac{q^{-b}}{\underline{(1-q^{a_{n}}y_{n})}\prod\limits_{j\neq n}(1-q^{a_{j}}y_{j})}\\
  =&\sum_{i=1}^{n-1}\CT_{q}\frac{q^{-b}}{\underline{(1-q^{a_i}y_i)}\prod\limits_{j\neq i}(1-q^{a_{j}}y_{j})}
  +\sum_{i=1}^{n-1}\CT_{q}\frac{-q^{-b+ca_{n}}y_{n}^{c}}{\underline{(1-q^{a_{i}}y_{i})}\prod_{j\neq i}(1-q^{a_{j}}y_{j})}.
  \end{align*}
Here, the last equality follows from equation \eqref{equ-properandlim=0} in Proposition \ref{p-partialfraction}, whose condition is satisfied when $0 < -b + c a_n \le a_n$. Then by Definition \ref{defn-A} and Lemma \ref{lem-unit-E}, we obtain
\begin{align*}
  \sigma_{P(b;\a)}=&\sum_{i=1}^{n-1}\left(\frac{1}{a_{i}}\sum_{\zeta:\zeta^{a_{i}}=1}\left.\left(\frac{q^{-b}}{\prod_{j\neq i}(1-q^{a_{j}}y_{j})}+\frac{-q^{-b+ca_n}y_{n}^{c}}{\prod_{j\neq i}(1-q^{a_{j}}y_{j})}\right)\right|_{q=(y_{i}^{\frac{1}{a_{i}}}\zeta)^{-1}}\right)=\sum_{j} Q_{j}
\end{align*}
This is a sum of $2(a_1+\cdots+a_{n-1})\leq 2(n-1)\Delta$ rational functions $Q_{j}$ in $y_i$ involving various roots of unity $\zeta$. We then need to compute
$$ \CT_s \sigma_{P(b;\a)}\left(e^{\kappa_1s},\dots,e^{\kappa_ns}\right)=\CT_s \left.Q\right|_{y_i=e^{\kappa_i s}}$$
for an admissible vector $\kappa$.

The remaining steps closely follow those in the proof of Proposition \ref{prop-polymialwhenbounded}. By Theorem \ref{theo-GTodd}, we can evaluate each term using $O(n^2 \Delta\log n\log \Delta)$ operations in $\Q$. Since there are fewer than $2(n-1)\Delta$ terms, this allows us to compute the denumerant $d(b;\a)$ using a total of
$$O\left(n^2 \Delta\log n\log \Delta \cdot 2(n-1)\Delta\right) = O(n^3 \Delta^2 \log n \log \Delta)$$
operations in $\Q$.

 This completes the proof.
\end{proof}

The preceding proof does not depend strongly on the variables $y_i$. In fact, the argument still holds if we replace $y_i$ with a monomial involving roots of unity, provided that we adjust the field $R_f$ accordingly. Note that this modification leads to a higher computational complexity. We summarize this observation in the following remark.

\begin{rem}\label{rem-denuwithunity}
Follow the notation as above. Let $L_{1},\dots,L_{n}$ be monomials in the variables $y_1,\dots,y_m$, and let $\zeta_{1},\dots,\zeta_{n}$ be elements in a cyclotomic field $R_{w}=\Q\left(e^{\frac{2\pi\imath}{w}}\right)$.
Then, the constant term
$$\CT_{s}\left.\left(\CT_{q} \frac{1}{\underline{1-q^{a_{i}}L_{i}\zeta_{i}}} \frac{q^{-b}}{\prod_{j=1, j\neq i}^{n}(1-q^{a_{j}}L_{j}\zeta_{j})}\right)\right|_{y_{k}=e^{\kappa_{k}s},k=1,\dots,m},$$
where $\kappa$ is an admissible vector, can be computed using $O\left(n^2 wa_i^2 \log n \log (wa_i)\right)$ operations in $\Q$.
 Consequently, if $a_1,\dots, a_{n-1}$ are bounded by a constant $\Delta$, then the constant term
$$\CT_{s}\left.\left(\CT_{q}\frac{q^{-b}}{\prod_{i=1}^{n}(1-q^{a_{i}}L_{i}\zeta_{i})}\right)\right|_{y_{k}=e^{\kappa_{k}s},k=1,\dots,m}$$
can be computed using $O\left(n^3 w\Delta^2 \log n \log (w\Delta)\right)$ operations in $\Q$.
\end{rem}

\section{Polynomial-time algorithm for $B$-bounded A-L denumerants}\label{sec-Bbounded}
In this section, we present the first polynomial-time algorithm for $B$-bounded A-L denumerants. Recall that for a given $n \in \mathbb{Z}_{+}$, a denumerant $d(b;\a)$ is called a $B$-bounded A-L denumerant if $\a$ can be written as $\a = \mathbf{p}M + \mathbf{r}N$. Here, $\mathbf{p}=(p_{1},\dots,p_{n})$ and $\mathbf{r}=(r_1,\dots,r_{n})$ are linearly independent integer sequences, $M$ and $N$ are coprime positive integers, and $|p_{i}|, |r_{i}| \leq B$ for all $i$. In what follows, we define $D_{i,j} := r_i p_j - r_j p_i$.

Since $\a$ is not necessarily bounded, existing results do not directly apply to this class of denumerants. However, we show that the computation of $d(b;\a)$ can be reduced to a set of subproblems. Each subproblem is then solved using Theorem \ref{thm-onlyonetermbig} (specifically, Remark \ref{rem-denuwithunity}).

Before proceeding, we introduce a running example for clarity.

\begin{exa}
Consider the sequence $\a = (a_1, \dots, a_6) = (12228, 36679, 36682, 48908, 61139, 73365)$. In this example, we take $M=12228$, $N=1$, $\mathbf{p} = (p_1, \dots, p_6) = (1, 3, 3, 4, 5, 6)$, and $\mathbf{r} = (r_1, \dots, r_6) = (0, -5, -2, -4, -1, -3)$. Our goal is to evaluate $d(b;\a)$ at $b=897168380$.
\end{exa}
This is one of the instances generated by Cornuejols et al.\ in \cite{CUWW97} and later revisited by Aardal and Lenstra in \cite{AL04}. In what follows, we append the symbol ``$\to$'' to certain formulas to indicate their exact values in this specific example.

As a starting point, our approach relies on a fundamental result from \cite{xin2024combinatorial}.
\begin{prop}[\cite{xin2024combinatorial}]\label{prop-keytrans}
If $\gcd(k,a)=1$, then for any rational function $F(x)$ whose denominator is relatively prime to $1-ux^{a}$, we have
 \begin{align*}
   \CT_{x}\frac{1}{\underline{1-ux^{a}}}F(x)=\CT_{x}\frac{1}{\underline{1-u^{1/k}x^{a}}}F(x^{k})
 \end{align*}
\end{prop}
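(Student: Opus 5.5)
The plan is to compute the left-hand side explicitly with Lemma~\ref{lem-unit-E}, do the same for the right-hand side, and check that the two finite sums coincide term by term. By Definition~\ref{defn-A} and Lemma~\ref{lem-unit-E}, the left-hand side is
\[
\frac{1}{a}\sum_{\zeta^{a}=1} F(x)\Big|_{x=(u^{1/a}\zeta)^{-1}},
\]
the average of $F$ over the $a$ roots of $1-ux^{a}$. The hypothesis that the denominator of $F$ is coprime to $1-ux^{a}$ guarantees that $F$ is regular at each of these roots, so every term is finite. The right-hand side, treated in the same way with $u$ replaced by $u^{1/k}$ and $F(x)$ replaced by $G(x):=F(x^{k})$, equals
\[
\frac{1}{a}\sum_{\eta^{a}=1} F(x^{k})\Big|_{x=(u^{1/(ka)}\eta)^{-1}}
=\frac{1}{a}\sum_{\eta^{a}=1} F\big((u^{1/a}\eta^{k})^{-1}\big).
\]
For this to be well defined, $G$ must also be regular at the roots of $1-u^{1/k}x^{a}$. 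This follows from the next step, since $x^{k}$ maps those roots onto the roots of $1-ux^{a}$.

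The key step is the reindexing. Since $\gcd(k,a)=1$, the map $\eta\mapsto\eta^{k}$ is a bijection of the group of $a$-th roots of unity. Substituting $\zeta=\eta^{k}$ therefore turns the second sum into the first exactly.

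The main point needing care is the consistent choice of branches: the choice of $u^{1/k}$, and the identity $(u^{1/(ka)})^{k}=u^{1/a}$ with the same fixed branch used on both sides. A different choice of $u^{1/(ka)}$ multiplies it by a $ka$-th root of unity $\omega$. Its $k$-th power then changes $u^{1/a}$ by the $a$-th root of unity $\omega^{k}$, which is absorbed by the summation over $\zeta$. Hence the formula is branch-independent.

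I would also remark that the operation in Definition~\ref{defn-A} is intrinsically the sum $A(0)$ of the partial-fraction coefficient, which depends only on the factor $1-ux^{a}$. This justifies applying Lemma~\ref{lem-unit-E} to the single underlined factor even when $F$ has other factors with mixed signs.
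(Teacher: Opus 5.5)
Your proof is correct. The paper does not prove this proposition itself; it is quoted from \cite{xin2024combinatorial}, so there is no in-paper argument to compare against. With the tools the paper does recall, your route is the natural one: evaluate both sides by Lemma~\ref{lem-unit-E}, then reindex through the bijection $\eta\mapsto\eta^{k}$ of the $a$-th roots of unity. Your remark on branches is worth recording. The $u^{1/k}$ in the statement is only determined up to a $k$-th root of unity, and Section~\ref{sec-Bbounded} applies the proposition with the particular choice $(t^{-N})^{1/N}=t^{-1}$. Your observation that the right-hand side does not depend on this choice is exactly what licenses that use.

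If you want to avoid roots of unity and branches altogether, there is a purely algebraic variant:
\begin{itemize}
\item Let $A$ be the reduced numerator on the left, so $\deg A<a$ and $A\equiv F \pmod{1-ux^{a}}$.
\item Since $1-u^{1/k}x^{a}$ divides $1-ux^{ka}$, substituting $x\mapsto x^{k}$ gives $F(x^{k})\equiv A(x^{k}) \pmod{1-u^{1/k}x^{a}}$.
\item Reduce each monomial $x^{kj}$ with $0\le j<a$ using $x^{a}\equiv u^{-1/k}$. It becomes a scalar multiple of $x^{kj \bmod a}$, and this degree is zero only for $j=0$ because $\gcd(k,a)=1$.
\item Hence the reduced numerator on the right is a permuted, rescaled copy of $A$ with the same constant term $A(0)$.
\end{itemize}
That version makes the role of the coprimality hypothesis transparent and needs only the single radical $u^{1/k}$. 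Yours instead produces the explicit averaging formula that the paper relies on computationally elsewhere.
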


First, we have $d(b;\a)=\sigma_{P(b;\a)}(\mathbf{1})$ with
\begin{align*}
\sigma_{P(b;\a)}&=\CT_{q}\frac{q^{-b}}{(1-q^{a_{1}}y_{1})\cdots(1-q^{a_{n}}y_{n})} \\
 &\to\CT_{q}\frac{q^{-897168380}}{(1-q^{12228}y_{1})\cdots(1-q^{73365}y_{6})}.
\end{align*}
Clearly, these numbers are too large to be handled directly.

We choose the field of iterated Laurent series, defined by the variable order $\texttt{vars}=[t,q,y_{1},\dots,y_{n},y_{n+1}]$ in \eqref{Ite-Laurent}, to be our working field. Here, $y_{n+1}$ is an auxiliary variable that we add later. In this field, for any $c\in\N$, we have
$$\CT_{t}\frac{t^{-c}}{1-t/q^{M}}=\CT_{t}t^{-c}\sum_{i\geq 0}(t/q^{M})^{i}=q^{-Mc},$$ then
\begin{align*}
 \sigma_{P(b;\a)}&=\CT_{q,t}\frac{q^{-b+Mc}\cdot t^{-c}}{(1-q^{a_{1}}y_{1})\cdots(1-q^{a_{n}}y_{n})(1-t/q^{M})}\\
  &=\CT_{q,t}\frac{-q^{-b+Mc+M}\cdot t^{-c-1}}{(1-q^{a_{1}}y_{1})\cdots(1-q^{a_{n}}y_{n})(1-q^{M}/t)}\\
  &\to\CT_{q,t}\frac{-q^{-897168380+12228(c+1)}t^{-c-1}}{(1-q^{12228}y_{1})\cdots(1-q^{73365}y_{6})(1-q^{12228}/t)}.
\end{align*}

Let us set $c = \lfloor b/M \rfloor \geq 0$, which ensures that $0 < -b+Mc+M \leq M$. Since $M$ and $N$ are coprime, we can choose integers $u,v \in \Z$ such that $uM+vN = -b+Mc+M$ and $0 < v - \sum_{i: r_i < 0} r_i \leq M$. Consequently, we have
\begin{align*}
&\CT_{q}\frac{-q^{-b+Mc+M}\cdot t^{-c-1}}{(1-q^{a_{1}})\cdots(1-q^{a_{n}})(1-q^{M}/t)}\\
(\text{by Proposition \ref{p-partialfraction}})=&\CT_{q}\frac{q^{-b+Mc+M}\cdot t^{-c-1}}{(1-q^{a_{1}})\cdots(1-q^{a_{n}})\underline{(1-q^{M}/t)}}\\
=&\CT_{q}\frac{q^{vN}\cdot t^{-c-1+u}}{(1-q^{r_{1}N}t^{p_{1}})\cdots(1-q^{r_{n}N}t^{p_{n}})\underline{(1-q^{M}/t)}}\\
(\text{by Proposition \ref{prop-keytrans}})=&\CT_{q}\frac{q^{v}\cdot t^{-c-1+u}}{(1-q^{r_{1}}t^{p_{1}})\cdots(1-q^{r_{n}}t^{p_{n}})\underline{(1-q^{M}/t^{N})}}   \\
(\text{by Proposition \ref{p-partialfraction}})=&\CT_{q}\frac{-q^{v}\cdot t^{-c-1+u}}{\underline{(1-q^{r_{1}}t^{p_{1}}y_{1})\cdots(1-q^{r_{n}}t^{p_{n}}y_{n})}(1-q^{M}/t^{N})}\\
\to&\CT_{q}\frac{-q^{12208}t^{-73371}}{\underline{(1-ty_{1})(1-q^{-5}t^{3}y_{2})\cdots(1-q^{-3}t^{6}y_{6})}(1-q^{12228}/t)},
\end{align*}
where we also rely on the condition that the $r_i$ are not all zero, a property guaranteed by the linear independence of $(p_{1}, p_{2}, \dots, p_{n})$ and $(r_{1}, r_{2}, \dots, r_{n})$.

After all, we have the following proposition.
\begin{prop}\label{prop-LAprobqtformula}
Given $n\in\Z_{+}$, let $\a = (a_1, \dots, a_n)$ be defined by $a_i = p_i M + r_i N \in \Z_{+}$. Here, $M, N \in \Z_{+}$ are coprime, $p_i, r_i \in \Z$, and the vectors $(p_1, \dots, p_n)$ and $(r_1, \dots, r_n)$ are linearly independent. By choosing integers $u,v \in\Z$ such that $uM+vN = -b+M\lfloor b/M \rfloor+M$ and $0 < v-\sum_{i:r_i<0}r_i \leq M$, we have
 \begin{align*}
 \CT_{q}\frac{q^{-b}}{(1-q^{a_{1}}y_{1})\cdots(1-q^{a_{n}}y_{n})}&=-\CT_{t}\left(\CT_{q}\frac{q^{v}\cdot t^{-\lfloor\frac{b}{M}\rfloor-1+u}}{\underline{(1-q^{r_{1}}t^{p_{1}}y_{1})\cdots(1-q^{r_{n}}t^{p_{n}}y_{n})}(1-q^{M}/t^{N})}\right)\notag\\
 &=-\sum_{i=1}^{n}\CT_{t}\left(\CT_{q}\frac{q^{v}\cdot t^{-\lfloor\frac{b}{M}\rfloor-1+u}}{\underline{(1-q^{r_{i}}t^{p_{i}}y_{i})}\prod\limits_{j\neq i}(1-q^{r_{j}}t^{p_{j}}y_{j})(1-q^{M}/t^{N})}\right).
\end{align*}
\end{prop}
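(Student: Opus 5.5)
Proposition \ref{prop-LAprobqtformula} is the general form of the chain of equalities just worked out for the running example, so I would prove it by formalising each step of that chain in the field of iterated Laurent series with variable order $[t,q,y_1,\dots,y_n]$. The argument has four steps.

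\textbf{Introducing $t$.} Write $c=\lfloor b/M\rfloor$. Expanding $1/(1-t/q^M)$ geometrically in $t$ gives $\CT_t t^{-c}/(1-t/q^M)=q^{-Mc}$. Hence the left-hand side equals
\[
\CT_{q,t}\frac{q^{-b+Mc}t^{-c}}{\prod_i(1-q^{a_i}y_i)(1-t/q^M)}.
\]
Rewriting $1/(1-t/q^M)=-(q^M/t)/(1-q^M/t)$ turns this into
\[
\CT_{q,t}\frac{-q^{e}t^{-c-1}}{\prod_i(1-q^{a_i}y_i)(1-q^M/t)},\qquad e:=-b+Mc+M,\quad 0<e\le M.
\]

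\textbf{Isolating the factor $1-q^M/t$.} Take the constant term in $q$ first. We have $t<q$, so $q^M/t$ is large, while each $q^{a_i}y_i$ with $a_i>0$ is small. The function is proper in $q$: its degree is $e-\sum a_i-M<0$. By \eqref{equ-proper}, the constant term is the sum of the contributions $A_i(0)$ from the small factors. By \eqref{equ-properandlim=0}, whose hypothesis $E|_{q=0}=0$ holds because $e>0$, this sum equals minus the contribution of the underlined factor $(1-q^M/t)$, under the convention of Definition \ref{defn-A}.

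\textbf{Substitution and rescaling.} By Lemma \ref{lem-unit-E}, the underlined contribution is a sum over $q=(t^{1/M}\zeta)$-type roots, so on the support of that evaluation we may replace $q^M$ by $t$. Then
\[
q^{a_i}=q^{p_iM+r_iN}=t^{p_i}q^{r_iN},\qquad q^{e}=q^{uM+vN}=t^{u}q^{vN}.
\]
Because $\gcd(N,M)=1$, Proposition \ref{prop-keytrans} with $k=N$ lets us replace $q^N$ by $q$; this changes the underlined factor to $1-q^M/t^N$. The hypothesis that the other denominators are coprime to it is checked as in the paper.

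\textbf{Switching back to the small factors.} Apply Proposition \ref{p-partialfraction} again, now to the rational function in $q$ with denominators $1-q^{r_i}t^{p_i}y_i$ and $1-q^M/t^N$. To use \eqref{equ-properandlim=0}, the numerator $q^v$ must give both the vanishing at $q=0$ and properness. I would verify this after rewriting every factor with $r_i<0$ in the form $1-q^{-r_i}(\cdot)^{-1}$. Then vanishing at $0$ needs $v-\sum_{r_i<0}r_i>0$, and properness needs $v-\sum_{r_i<0}r_i-\sum_{r_i>0}r_i-M<0$, or whichever version of this degree count applies. This is exactly where the normalisation $0<v-\sum_{r_i<0}r_i\le M$ is used. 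Factors with $r_i=0$ have no $q$-dependence and contribute zero under Definition \ref{defn-A}. Linear independence of $\mathbf{p}$ and $\mathbf{r}$ guarantees that some $r_i\neq0$, so the sum is nontrivial. This gives the first displayed formula, and the second is just additivity of the underlining from Definition \ref{defn-A}.

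I expect the main obstacle to be the sign and degree bookkeeping in this last step. One must check that the degree bound $v-\sum_{r_i<0}r_i\le M$ really yields properness once the factors with $r_i>0$ are included; if it does not directly, I would try absorbing those factors using the full denominator degree in $q$. One must also check that the large and small classification of each $q^{r_i}t^{p_i}y_i$, with $t$ ordered first, is consistent with the underlined convention, so that no boundary terms are lost.
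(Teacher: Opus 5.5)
Your proposal is correct and is essentially the paper's own derivation: introduce $t$, trade the small factors for the underlined factor $1-q^M/t$ via \eqref{equ-properandlim=0}, reduce $q^M\equiv t$ and rescale with Proposition~\ref{prop-keytrans}, then trade back via \eqref{equ-properandlim=0}. The one slip is in the degree count you flagged: after rewriting the factors with $r_i<0$, the $q$-degree is $v-\sum_{r_i<0}r_i-\sum_{i}|r_i|-M\le-\sum_i|r_i|<0$, whereas your expression omits the $|r_i|$ contributed by the rewritten negative factors and would wrongly require some $r_i>0$, which fails in the running example; so properness holds exactly because some $r_i\neq 0$---this, not nontriviality of the sum, is where the linear independence of $\mathbf{p}$ and $\mathbf{r}$ enters.
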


By introducing the auxiliary variable $t$ and reformulating the generating function, we successfully isolate the extreme parameters $M$ and $N$. This allows us to achieve our goal: writing $d(b;\a)$ as a sum of weighted $\Delta$-bounded denumerants. As a result, this leads to a polynomial-time algorithm whose computational complexity is independent of $M$ and $N$.

\begin{thm}\label{thm-crudealgocomplexity}
Let $n, B \in \Z_{+}$, and let $\a$ be a $B$-bounded A-L sequence. This means there exist coprime positive integers $M$ and $N$ such that $a_i = p_i M + r_i N$, where the integer vectors $\mathbf{p} = (p_1, \dots, p_n)$ and $\mathbf{r} = (r_1, \dots, r_n)$ are linearly independent, and $|p_i|, |r_i| \le B$ for all $i$. Then, for any nonnegative integer $b$, the denumerant $d(b;\a)$ can be computed using
$$O(n^4 B^6 \log n \log B)$$
operations in $\Q$.
\end{thm}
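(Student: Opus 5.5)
Starting from Proposition~\ref{prop-LAprobqtformula}, I write $\sigma_{P(b;\a)}$ as $-\sum_{i=1}^n T_i$. Here $T_i$ is the iterated constant term in $q$ (then in $t$) of the summand whose distinguished denominator factor $1-q^{r_i}t^{p_i}y_i$ is underlined. The plan is to evaluate each $T_i$ in two stages: first in $q$, then in $t$. After the $q$-stage, each resulting term should be a weighted nearly $\Delta$-bounded denumerant in $t$ that Remark~\ref{rem-denuwithunity} can handle.

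\textbf{Stage one: eliminate $q$.} Fix $i$. If $r_i=0$, the factor contributes nothing under Definition~\ref{defn-A}, so assume $r_i\neq0$. By Lemma~\ref{lem-unit-E} applied in the variable $q$ with exponent $r_i$ (at most $B$ in absolute value), $T_i$ becomes an average over the $|r_i|$ roots $\xi$ of $\xi^{r_i}=1$ of the rest of the expression evaluated at $q=(t^{p_i}y_i)^{-1/r_i}\xi^{-1}$.

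Under this substitution:
\begin{itemize}
\item each factor $1-q^{r_j}t^{p_j}y_j$ with $j\neq i$ becomes $1-t^{(r_ip_j-r_jp_i)/r_i}y_jy_i^{-r_j/r_i}\xi^{-r_j}$, whose $t$-exponent is $D_{i,j}/r_i$;
\item the factor $1-q^M/t^N$ becomes $1-t^{-(p_iM+r_iN)/r_i}y_i^{-M/r_i}\xi^{-M}=1-t^{-a_i/r_i}(\cdots)$;
\item the numerator becomes $t^{-\lfloor b/M\rfloor-1+u-p_iv/r_i}$ times a monomial in the $y$'s and $\xi$.
\end{itemize}

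To remove the fractional exponents, I substitute $t\mapsto t^{|r_i|}$. Since $\CT_t$ of a Laurent series in $t^{1/|r_i|}$ only sees integral powers, this is legitimate, or alternatively I can invoke Proposition~\ref{prop-keytrans}. After the substitution:
\begin{itemize}
\item the small exponents are $\pm D_{i,j}$, with $|D_{i,j}|\le 2B^2$;
\item exactly one large exponent remains, namely $\pm a_i$;
\item the numerator exponent is $-b'$, where $b'=|r_i|(\lfloor b/M\rfloor+1-u)+p_iv\,\sgn(r_i)$.
\end{itemize}

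\textbf{Stage two: normalize to denumerant form.} I normalize every factor to the form $1-t^{e}(\cdot)$ with $e>0$ by flipping negative exponents, $1/(1-X)=-X^{-1}/(1-X^{-1})$. Flips only shift $b'$ and change signs. Factors with $D_{i,j}=0$ carry no $t$ at all; these pass through as constants in $y$, which the admissible substitution $y_k=e^{\kappa_k s}$ handles, since they are exactly the $t$-free denominators of the kind appearing in the Gtodd setup. Each inner term is then a $t$-constant term of exactly the shape in Remark~\ref{rem-denuwithunity}: the bounded exponents are at most $\Delta=2B^2$, there is one large exponent $a_i$, the roots of unity lie in $R_w$ with $w=|r_i|\le B$, and the $y$-monomials have fractional exponents, which is fine after scaling $\kappa$.

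The normalization is the step I expect to be the main obstacle. It relies on the choice $0<v-\sum_{r_j<0}r_j\le M$ together with Proposition~\ref{p-partialfraction}, which make the expression proper with the right sign of $b'$. I also need to check that the underlined selection is compatible with the iterated Laurent series ordering $[t,q,\ldots]$, i.e.\ which factors are ``small'' after substitution.

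\textbf{Complexity.} By Remark~\ref{rem-denuwithunity}, each inner term costs
\[
O\bigl(n^3 w\Delta^2\log n\log(w\Delta)\bigr)=O\bigl(n^3B\cdot B^4\log n\log B\bigr).
\]
There are $\sum_i|r_i|\le nB$ inner terms. The total is therefore
\[
O\bigl(nB\cdot n^3B^5\log n\log B\bigr)=O\bigl(n^4B^6\log n\log B\bigr),
\]
which matches the claimed bound. This count absorbs the at most factor-two blowup from moving between the two sums and the constant factor in $\log(w\Delta)=O(\log B)$.
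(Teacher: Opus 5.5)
Your route is the paper's own. You use Proposition~\ref{prop-LAprobqtformula}, then Lemma~\ref{lem-unit-E} in $q$ with $|r_i|\le B$ roots of unity, then the substitution $t\mapsto t^{|r_i|}$, and finally Remark~\ref{rem-denuwithunity} with $w\le B$ and $\Delta=2B^2$ on at most $nB$ terms. Your exponents and your final count match the paper.

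The step that fails as stated is in Stage two. The choice $0<v-\sum_{r_j<0}r_j\le M$ does \emph{not} make the normalized numerator $t^{-b''}$ with $b''\ge 0$. That choice exists so that the $q$-function meets the hypotheses of \eqref{equ-properandlim=0} when Proposition~\ref{prop-LAprobqtformula} is derived. It does not control the sign of the $t$-exponent once $q$ is eliminated. For instance, take the running example with $b=20$. You are forced to take $u=0$ and $v=12208$. For $i=2$ ($r_2=-5$), the denominator exponents after $t\mapsto t^5$ are $5,9,8,22,21,36679$, all positive, so nothing is flipped, while the numerator is $t^{36619}$. Such terms fall outside the hypothesis $b\ge 0$ under which Theorem~\ref{thm-onlyonetermbig} and Remark~\ref{rem-denuwithunity} are established. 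So your claim that each inner term has exactly the shape of the Remark is not justified for them.

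The paper closes this with the test \eqref{inequ-<0}. After normalization every $t$-factor is $1/(1-t^{e}Y)$ with $e>0$, which expands in nonnegative powers of $t$ because $t$ is the first variable. So if the normalized numerator exponent is positive, the $t$-constant term is $0$ and the term is discarded. For the surviving terms $b''\ge 0$, the function is proper because of the $a_i$-factor alone, and the Remark applies verbatim.

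There is also an alternative repair. Your properness claim is in fact true for every $i$. Before scaling, the numerator exponent equals $-b/M-va_i/(Mr_i)$. Using $MD_{i,j}=r_ia_j-r_ja_i$ and $-S<v\le M-S$, where $S=-\sum_{r_j<0}r_j$, one checks that the degree is negative in both cases $r_i>0$ and $r_i<0$. The proof of Theorem~\ref{thm-onlyonetermbig} only uses properness, not $b\ge 0$. But this has to be proved, not asserted. With either repair, your complexity count goes through unchanged.
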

\begin{proof}
First, by Proposition \ref{prop-LAprobqtformula}, we have
\begin{align}
\sigma_{P(b;\a)}(\y)=&-\sum_{i=1}^{n}\CT_{t}\left(\CT_{q}\frac{q^{v}\cdot t^{-\lfloor\frac{b}{M}\rfloor-1+u}}{\underline{(1-q^{r_{i}}t^{p_{i}}y_{i})}\prod\limits_{j\neq i}(1-q^{r_{j}}t^{p_{j}}y_{j})(1-q^{M}/t^{N})}\right),\label{equ-LAprobqtformula}
\end{align}
and $d(b;\a)=\lim\limits_{\y\to\1}\sigma_{P(b;\a)}$.

For a given $i \in \{1,\dots,n\}$, Definition \ref{defn-A} implies that the corresponding term in \eqref{equ-LAprobqtformula} vanishes when $r_i = 0$. If $r_i \neq 0$, we can assume without loss of generality that $r_i > 0$, since the case $r_i < 0$ can be handled similarly. Then, by Lemma \ref{lem-unit-E}, we obtain
\begin{align}
&\CT_{t}\left(\CT_{q}\frac{q^{v}\cdot t^{-\lfloor\frac{b}{M}\rfloor-1+u}}{\underline{(1-q^{r_{i}}t^{p_{i}}y_{i})}\prod\limits_{j\neq i}(1-q^{r_{j}}t^{p_{j}}y_{j})(1-q^{M}/t^{N})}\right)\notag\\
=&\frac{1}{r_{i}}\sum_{\zeta:\zeta^{r_{i}}=1}\CT_{t}\frac{ t^{-\frac{vp_{i}}{r_{i}}-\lfloor\frac{b}{M}\rfloor-1+u}y_{i}^{-\frac{v}{r_{i}}}\zeta^{-v}}{\prod\limits_{j:j\neq i}(1-t^{\frac{D_{i,j}}{r_{i}}}y_{i}^{-\frac{r_{j}}{r_{i}}}y_{j}\zeta^{-r_{j}})(1-t^{-\frac{a_{i}}{r_{i}}}y_{i}^{-\frac{M}{r_{i}}}\zeta^{-M})}\notag\\
=&\frac{1}{r_{i}}\sum_{\zeta:\zeta^{r_{i}}=1}\CT_{t}\frac{ t^{-vp_{i}-r_{i}\lfloor\frac{b}{M}\rfloor-r_{i}+r_{i}u}y_{i}^{-\frac{v}{r_i}}\zeta^{-v}}{\prod\limits_{j:j\neq i}(1-t^{D_{i,j}}y_{i}^{-\frac{r_{j}}{r_{i}}}y_{j}\zeta^{-r_{j}})(1-t^{-a_{i}}y_{i}^{-\frac{M}{r_{i}}}\zeta^{-M})},\label{equ-onlytwithoutq}
\end{align}
where the last step follows from the substitution $t \mapsto t^{r_i}$, which preserves the constant term since $r_i > 0$. Therefore, when $r_i < 0$, the only difference is that we instead use the substitution $t \mapsto t^{-r_i}$.

Note that if
$$-vp_{i}-r_{i}\lfloor\frac{b}{M}\rfloor-r_{i}+r_{i}u-\sum_{j:D_{i,j}<0}D_{i,j}+a_{i}>0,$$
it immediately follows that the value of \eqref{equ-onlytwithoutq} is 0.
Therefore, in what follows, we can assume that
\begin{equation}\label{inequ-<0}
-vp_{i}-r_{i}\lfloor\frac{b}{M}\rfloor-r_{i}+r_{i}u-\sum_{j:D_{i,j}<0}D_{i,j}+a_{i}\le 0.
\end{equation}
Since $|r_i|, |p_i| \leq B$, each term in the sum is a weighted nearly $2B^2$-bounded denumerant, meaning Theorem \ref{thm-onlyonetermbig} cannot be applied directly.
Instead, we must follow the steps in the proof of Theorem \ref{thm-onlyonetermbig} to write $\sigma_{P(b;\a)}$ as a large sum of simple rational functions in the variables $y_i$.
We then choose a global admissible vector $\kappa$ to perform the substitution $y_i \mapsto e^{\kappa_i s}$.
This substitution is applied to \eqref{equ-onlytwithoutq}, where $\zeta$ is a $|r_i|$-th root of unity and $|r_i| \leq B$.
Together with \eqref{inequ-<0}, this ensures that Remark \ref{rem-denuwithunity} is applicable.
By Remark \ref{rem-denuwithunity}, each term takes $O(n^3 B^5 \log n \log B)$ operations in $\Q$.
Since evaluating $d(b;\a)$ requires computing fewer than $nB$ such terms, the total computation takes $O(n^4 B^6 \log n \log B)$ time.

This completes the proof.
\end{proof}

While the proposed polynomial-time algorithm is effective, it still exhibits considerable computational redundancy. In the next section, we leverage additional constant-term techniques to develop a refined algorithm for $\Delta$-modular A-L denumerants.

\section{Polynomial-time algorithm for $\Delta$-modular A-L denumerants}\label{sec-5}
During the derivation in Section \ref{sec-Bbounded}, the iterative application of Lemma \ref{lem-unit-E} produces groups of similar terms involving roots of unity. An elegant solution to this issue is provided in \cite{XXZ25}. We first outline their approach and then apply it to optimize our algorithm.

\subsection{Constant Term Extraction Over Matrix Form}

We pick out necessary ingredients from \cite{XXZ25}.

\begin{dfn}[\cite{XXZ25}]\label{defn-Z_y}
Define the operators $\mathcal{Z}_{y_j}$ acting on fractional series by
\begin{equation}
  \mathcal{Z}_{y_j}\sum_{\kappa\in\Q^n}a_{\kappa}\y^{\kappa}:=\sum_{\kappa\in\Q^{n},\kappa_j\in \Z}a_{\kappa}\y^{\kappa},
\end{equation}
where the coefficients $a_{\kappa}$ are free of the $y$ variables.
In other words, $\mathcal{Z}_{y_j}$ extracts all terms with integer exponents of $y_j$.
\end{dfn}

We will consider a rational function $E$ of the form
\begin{align}\label{e-integer-form}
E= \frac{L(\lambda)}{\prod_{j=1}^n (1-u_j \lambda^{b_j})},
\end{align}
where $b_j \in \Q$ and the denominator factors are coprime to each other.

\begin{dfn}[\cite{XXZ25}]\label{defn-CT-Z}
Suppose $E$ is given as in \eqref{e-integer-form}.
If $b_j\in \Q$ is nonzero and the variable $y$ only appears at $u_j=yu_j'$, then define
$$ \CT_{\lambda, j} E:=\sgn(b_j) \mathcal{Z}_{y} \left.\left(E(1-u_j \lambda^{b_j}) \right)\right|_{\lambda= (yu_j')^{-1/{b_j}}}.$$
\end{dfn}

The following proposition and corollary characterize the properties of this operator.

\begin{prop}\cite{XXZ25}\label{prop-singleCT}
Suppose $E$ is defined as in \eqref{e-integer-form}, where the variable $y$ appears exclusively in the relation $u_j = y u_j'$. If the substituted expression $\left. E \right|_{\lambda \mapsto \lambda^m}$ is integral in $\lambda$ for a positive integer $m$, then its constant term is independent of $m$; that is,
\begin{equation}
\CT_{\lambda, j} \left( \left.E \right|_{\lambda\mapsto \lambda^m} \right) = \CT_{\lambda, j} E.
\end{equation}
Moreover, if $E$ itself is integral in $\lambda$, we obtain
\begin{equation}
\CT_\lambda \frac{1}{\underline{1-u_j \lambda^{b_j}}} E(1-u_j \lambda^{b_j}) = \CT_{\lambda, j} E.
\end{equation}
\end{prop}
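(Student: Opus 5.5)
The plan is to reduce both claims to Lemma \ref{lem-unit-E} and Definition \ref{defn-CT-Z}, by checking directly that the two sides agree as fractional series.

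\textbf{Second claim.} Assume $E$ is integral in $\lambda$, and first take $b_j=b>0$ to be a positive integer. By Lemma \ref{lem-unit-E}, the left side equals
\[
\frac{1}{b}\sum_{\zeta^{b}=1} G\big|_{\lambda=(u_j^{1/b}\zeta)^{-1}}, \qquad G:=E(1-u_j\lambda^{b}).
\]
Write $u_j=yu_j'$ and substitute $\lambda=(yu_j')^{-1/b}\zeta^{-1}$ into $G$. The result is a fractional series in $y$ whose exponents lie in $\frac1b\Z$. Each monomial $y^{k/b}$ picks up the factor $\zeta^{-k}$. Averaging over the $b$-th roots of unity then keeps exactly those monomials with $b\mid k$, that is, those with integral exponent in $y$. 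This average is precisely $\mathcal{Z}_y\, G|_{\lambda=(yu_j')^{-1/b}}$.

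To make this rigorous, I will use that $y$ appears only through $u_j$. Then every $y$-exponent after substitution has the form (integer) $+\,k/b$, where $k$ is the resulting power of $\lambda$. Hence the root-of-unity filter coincides with $\mathcal{Z}_y$.

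For $b<0$, I use the form $1-(u_j\lambda^{b})^{-1}$ from Definition \ref{defn-A}. This produces the sign $\sgn(b_j)$, because
\[
\frac{1}{1-M}=\frac{-M^{-1}}{1-M^{-1}}.
\]

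\textbf{First claim.} Substituting $\lambda\mapsto\lambda^m$ replaces $b_j$ by $mb_j$ and leaves $u_j$ unchanged. The evaluation point becomes $\lambda=(yu_j')^{-1/(mb_j)}$. Under this substitution, $\lambda^m$ takes exactly the original value $(yu_j')^{-1/b_j}$. So the function of $y$ being filtered is literally the same series, and applying $\mathcal{Z}_y$ gives the same result. The sign is also unchanged, since $\sgn(mb_j)=\sgn(b_j)$.

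\textbf{Main obstacle.} The delicate point is making sense of fractional powers and branch choices consistently. One has to check that $\mathcal{Z}_y$ is well-defined on the substituted expression, and that the answer does not depend on the chosen branch of $(yu_j')^{1/b_j}$. Changing the branch multiplies each $y^{k/b}$ by a root of unity. This alters only the non-integral-exponent terms, which $\mathcal{Z}_y$ discards, so the result is branch-independent. That observation is what I expect to make both claims go through.
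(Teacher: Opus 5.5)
Your proof is correct. The paper gives no proof of its own here; it quotes the proposition from \cite{XXZ25}. Your argument is the natural derivation from the tools the paper does state.

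For the second identity, Lemma~\ref{lem-unit-E} turns the left side into an average over $\zeta^{b_j}=1$. Since $G=E(1-u_j\lambda^{b_j})$ contains no $y$, a term of $\lambda$-degree $k$ becomes a term of $y$-degree $-k/b_j$ carrying the factor $\zeta^{-k}$. Roots of unity do not change whether a monomial is small or large, so the expansions for different $\zeta$ agree term by term. The average is therefore exactly $\mathcal{Z}_y$. For the first identity, Definition~\ref{defn-CT-Z} gives the result at once: at the new evaluation point, $\lambda^m$ equals the old evaluation point, and $\sgn(mb_j)=\sgn(b_j)$.

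Two small remarks. Your detour through Definition~\ref{defn-A} for $b_j<0$ is valid, because the extra factor $-(u_j\lambda^{b_j})^{-1}$ equals $-1$ at the evaluation point. It is not needed, though: Lemma~\ref{lem-unit-E} holds for any nonzero integer $b_s$, and its prefactor $1/b_s=\sgn(b_s)/|b_s|$ already supplies the sign. Also, the integrality hypothesis plays no role in the first identity. Its job is to let you chain the two identities, which shows that the underlined constant term of $E|_{\lambda\mapsto\lambda^m}$ does not depend on $m$.
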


\begin{cor}\cite{XXZ25}\label{cor-contri-dual}
Suppose $E$ is a fractional rational function given as in \eqref{e-integer-form} where $y_j$ appears only at $u_j=y_ju_j'$.
If $E$ is proper in $\lambda$, then
\begin{equation}\label{equ-matrixproper}
  \CT_\lambda E = \sum_{u_j \lambda^{b_j} <1}\CT_{\lambda, j} E.
\end{equation}
Moreover, if $\deg_\lambda E < 0$ and $E|_{\lambda=0}=0$, we deduce that
\begin{equation}\label{equ-matrixproperandlim=0}
\CT_{\lambda, 1} E+\CT_{\lambda, 2} E+\cdots+\CT_{\lambda, n} E=0.
\end{equation}
\end{cor}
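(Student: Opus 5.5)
The plan is to reduce both statements to the integral case, which is already covered by Proposition \ref{p-partialfraction}. The bridge is Proposition \ref{prop-singleCT}, which says that $\CT_{\lambda,j}$ does not change under the substitution $\lambda\mapsto\lambda^m$.

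First I clear denominators. Let $m$ be a positive integer that is a common denominator of all the exponents $b_j$ and of all exponents of $\lambda$ occurring in $L(\lambda)$. Set $\tilde E:=E|_{\lambda\mapsto\lambda^m}$, so that $\tilde E$ is integral in $\lambda$ with exponents $mb_j\in\Z$. I will check the following properties of $\tilde E$:
\begin{enumerate}
\item $\CT_\lambda \tilde E=\CT_\lambda E$. The substitution maps $\lambda^{\alpha}$ to $\lambda^{m\alpha}$, and only $\alpha=0$ contributes to the constant term. This expansion is compatible with the working field, since each factor $1/(1-u_j\lambda^{b_j})$ is expanded according to whether $u_j\lambda^{b_j}$ is small or large.
\item $\tilde E$ is proper, because $\deg_\lambda \tilde E=m\deg_\lambda E<0$.
\item $\tilde E|_{\lambda=0}=E|_{\lambda=0}$, so the vanishing hypothesis is also preserved.
\item The monomial $u_j\lambda^{mb_j}$ is small if and only if $u_j\lambda^{b_j}$ is small. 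The order depends only on the sign of the leading exponent in the variable order $[\y,\lambda]$, and multiplying the $\lambda$-exponent by $m>0$ does not change any sign.
\item The denominator factors of $\tilde E$ remain pairwise coprime. Each $u_j$ carries its own variable $y_j$, so no two factors can share a root.
\end{enumerate}

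Next I rewrite $\tilde E$ in the positive form \eqref{e-positive-form}. A factor with $mb_j<0$ is replaced using $1-u_j\lambda^{mb_j}=-u_j\lambda^{mb_j}(1-u_j^{-1}\lambda^{-mb_j})$, and the resulting monomial is absorbed into the numerator. By \eqref{equ-proper},
$$\CT_\lambda \tilde E=\sum_{u_j\lambda^{mb_j}<1}A_j(0).$$
By Definition \ref{defn-A}, each $A_j(0)$ is exactly $\CT_\lambda \frac{1}{\underline{1-u_j\lambda^{mb_j}}}\tilde E(1-u_j\lambda^{mb_j})$, which is well defined in either of its two equivalent forms.

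I then pass back to $E$. Since $\tilde E$ is integral, the second part of Proposition \ref{prop-singleCT} identifies $A_j(0)$ with $\CT_{\lambda,j}\tilde E$. The first part of Proposition \ref{prop-singleCT}, applied to $E$ with this $m$, gives $\CT_{\lambda,j}\tilde E=\CT_{\lambda,j}E$. Substituting these identities, together with the invariance of smallness from step 4, yields \eqref{equ-matrixproper}.

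For \eqref{equ-matrixproperandlim=0}, I use that $\deg_\lambda\tilde E<0$ and $\tilde E|_{\lambda=0}=0$. Then \eqref{equ-properandlim=0} gives $\sum_j A_j(0)=0$, where now the sum runs over all $j$, large factors included. Translating each $A_j(0)$ into $\CT_{\lambda,j}E$ as above finishes the proof.

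The main obstacle is the bookkeeping for factors with $b_j<0$. Proposition \ref{p-partialfraction} is stated for positive exponents, while Definition \ref{defn-CT-Z} carries the factor $\sgn(b_j)$ and substitutes $\lambda=(yu_j')^{-1/b_j}$ directly. I must check that after the rewriting $1-u\lambda^{-a}=-u\lambda^{-a}(1-u^{-1}\lambda^{a})$, the coefficient $A_j(0)$ from the positive form agrees, sign included, with $\CT_{\lambda,j}$. I would first try to verify this directly from Lemma \ref{lem-unit-E}: the root-of-unity average there carries the prefactor $1/b_s$, and the operator $\mathcal Z_y$ together with $\sgn(b_j)$ should reproduce that average. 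I expect the sign to be exactly the one that Definition \ref{defn-A} builds in through its two equivalent forms. Once this identification is settled, I would simply cite Proposition \ref{prop-singleCT} for it rather than redo it.
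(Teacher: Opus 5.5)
\textbf{Where your argument is sound.} The paper gives no proof of this corollary; it imports it from \cite{XXZ25}. Your reduction is the natural one: clear fractional exponents by $\lambda\mapsto\lambda^m$, apply Proposition~\ref{p-partialfraction} to $\tilde E$, and return via both parts of Proposition~\ref{prop-singleCT}. The sign issue you flag as the main obstacle is harmless. By Definition~\ref{defn-A}, $A_j(0)$ depends only on the rational function and the factor, not on which form the factor is written in. Proposition~\ref{prop-singleCT} identifies it with $\CT_{\lambda,j}\tilde E$ for either sign of $b_j$; equivalently, the signed prefactor $1/b_s$ in Lemma~\ref{lem-unit-E} matches $\sgn(b_j)\mathcal{Z}_y$ in Definition~\ref{defn-CT-Z}. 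Your argument for \eqref{equ-matrixproperandlim=0} is therefore fine, since that sum runs over all $j$.

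\textbf{The gap: the index set in \eqref{equ-matrixproper}.} When you rewrite a factor with $mb_j<0$ as $-u_j\lambda^{mb_j}(1-u_j^{-1}\lambda^{-mb_j})$, \eqref{equ-proper} counts it if and only if $u_j^{-1}\lambda^{-mb_j}<1$. That is, it counts it when $u_j\lambda^{mb_j}>1$, the opposite of the condition you wrote. Your item~4 controls only the substitution $\lambda\mapsto\lambda^m$, not this inversion. What your argument actually proves is
\[
\CT_\lambda E=\sum_{j:\ u_j^{\sgn(b_j)}\lambda^{|b_j|}<1}\CT_{\lambda,j}E .
\]
The literal reading with $u_j\lambda^{b_j}<1$ is false when some $b_j<0$. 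Take vars $=[y,\lambda]$ and $E=\lambda^{-1}/(1-y\lambda^{-1})=1/(\lambda-y)$, which is proper. Here $y\lambda^{-1}<1$, so $E=\sum_{k\ge 0}y^k\lambda^{-k-1}$ and $\CT_\lambda E=0$. But $\CT_{\lambda,1}E=-\mathcal{Z}_y(\lambda^{-1})|_{\lambda=y}=-y^{-1}$, which agrees with $A_1(0)$ computed from $E=-y^{-1}/(1-y^{-1}\lambda)$. With vars $=[\lambda,y]$ the failure is reversed: $\CT_\lambda E=-y^{-1}$, while the literal sum is empty.

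The normalized reading is also the one the paper needs in the proof of Theorem~\ref{Thm-Center}. There $t$ comes first in the variable order, so every factor with nonzero $t$-exponent is small after normalization. That is why the sum runs over all $1\le j\le n+1$, including factors with negative $t$-exponent.

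To repair the proof, state the summation condition as smallness of each factor after normalizing its $\lambda$-exponent to be positive, or restrict to $b_j>0$. With that correction the rest of your proof goes through as written.
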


The following discussion illustrates how constant term operations can be linked to elementary matrix operations. Let $P(A,\b)=\{\alpha\in\R_{\geq0}^{n}: A\alpha=\b\}$, where $A$ is an $r\times n$ integer matrix and $\b$ is an integral vector in $\Z^r$.
Its lattice point generating function is denoted by
$$\sigma_{P(A,\b)}=\sum_{\alpha\in P(A,\b)\cap \Z^{n}}\y^{\alpha}=\sum_{\alpha\in P(A,\b)\cap \Z^{n}}y_{1}^{\alpha_{1}}\cdots y_{n}^{\alpha_{n}}.$$
 There is a well-known constant term formula:
\begin{align}\label{equ-inhomo}
  \sigma_{P(A,\b)}=\CT_{\Lambda} \frac{\Lambda^{-\b}}{
\prod_{j=1}^n (1- \Lambda^{A_j} y_j)}, \text{where $A_j$ is the $j$-th column  of $A$}.
\end{align}
The above rational function is encoded by the following matrix:
\begin{equation}\label{equ-M}
M=\left(\begin{array}{cc}
         \textit{id}_n& \0\\
         A            &-\b
        \end{array}\right)=(m_{i,j})_{(n+r)\times{(n+1)}},
\end{equation}
where $\textit{id}_n$ denotes the identity matrix of order $n$.

The matrix $M \leftarrow (i,j)$ is defined as the result of applying Gaussian column elimination to $M$ using the nonzero entry $m_{n+i, j}$ as the \emph{pivot}. Specifically, $-\frac{m_{n+i,s}}{m_{n+i,j}}$ times the $j$-th column is added to the $s$-th column for all $s \neq j$, after which row $n+i$ and column $j$ are omitted from subsequent steps.
Recursively, this operation extends to$$M \leftarrow \left((i_1,\dots, i_k);(j_1,\dots, j_k)\right) = M \leftarrow (i_1,j_1) \leftarrow \cdots \leftarrow (i_k,j_k).$$
Letting $(p_1, \dots, p_k)$ denote the sequence of pivots—where $p_\ell$ is the $(n+i_{\ell}, j_\ell)$-th entry of $M \leftarrow ((i_1,\dots, i_{\ell-1});(j_1,\dots, j_{\ell-1}))$—the product $p_1 \cdots p_k$ equals the $k \times k$ minor $A\begin{pmatrix} i_1,\dots,i_k \\ j_1,\dots,j_k \end{pmatrix}$.
This corresponds exactly to the determinant of the submatrix of $A$ formed by rows $i_s$ and columns $j_t$.

For convenience, we denote the matrix $M\leftarrow ((i_1,\dots, i_k);(j_1,\dots, j_k))$ by $M_k\langle I;J\rangle$, where $I=\{i_1,\dots, i_k\},J=\{j_1,\dots,j_k\}$.

We introduce the notation
\begin{equation}\label{equ-Psi}
[M_k\langle I;J\rangle]:=\frac{(\y,\Lambda)^{\gamma_{n+1}}}{\left(1-(\y,\Lambda)^{\gamma_1}\right)\cdots \left(1-(\y,\Lambda)^{\gamma_n}\right)},
\end{equation}
where $\gamma_j \in \Q^{n+r}$ is the $j$-th column of $M_k\langle I;J\rangle$. For any $j \in J$, the corresponding denominator factor is understood to be $1$. Additionally, $M_r\langle J\rangle$ serves as shorthand for $M_r\langle [r];J\rangle$.

The operators $\CT\limits_{\lambda_i,j}$ and $\mathcal{Z}_{\y}$ are commutative.
 \begin{prop}[\cite{XXZ25}]\label{prop-commu}
 Follow the notation as above. If
$i \not\in I,~j\not\in J$, then we have
 \begin{equation}
\CT_{\lambda_i,j}\mathcal{Z}_{\y} [M_k\langle I;J\rangle]=\mathcal{Z}_{\y}\CT_{\lambda_i,j}[M_k\langle I;J\rangle].
\end{equation}
\end{prop}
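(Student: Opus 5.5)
The plan is to prove the identity by working directly from the definitions and reducing it to a statement about which monomials survive each operation. Both $\mathcal{Z}_{\y}$ and $\CT_{\lambda_i,j}$ act termwise on fractional series. The operator $\mathcal{Z}_{\y}$ keeps exactly the monomials whose $\y$-exponents are all integers. By Definition~\ref{defn-CT-Z}, $\CT_{\lambda_i,j}$ does three things: it multiplies by the $j$-th denominator factor, substitutes $\lambda_i=(y_ju_j')^{-1/b_j}$ (where $b_j=\gamma_j$'s $\lambda_i$-entry), and then applies $\mathcal{Z}_{y_j}$. Since $\mathcal{Z}_{y_j}$ and $\mathcal{Z}_{\y}$ are both projections onto sets of exponent vectors, they commute with each other and are idempotent. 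So the only real issue is whether $\mathcal{Z}_{\y}$ commutes with the substitution step.

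First, I would write $[M_k\langle I;J\rangle]$ explicitly via \eqref{equ-Psi}. I would observe that the $\y$-rows of $M_k\langle I;J\rangle$ are obtained from $\textit{id}_n$ by column operations, so the variable $y_j$ with $j\notin J$ still appears only in the $j$-th factor, through $u_j=y_ju_j'$. This is exactly the hypothesis needed for $\CT_{\lambda_i,j}$ to be defined, and it uses $i\notin I$, $j\notin J$. Second, I would expand $[M_k\langle I;J\rangle](1-u_j\lambda_i^{b_j})$ as a series $\sum c_{\alpha,\beta}\,\y^{\alpha}\lambda_i^{\beta}\cdots$ in the working field. The substitution sends each monomial $\y^{\alpha}\lambda_i^{\beta}$ to $\y^{\alpha}(y_ju_j')^{-\beta/b_j}$, which changes only the $y_j$-exponent and the exponents of variables in $u_j'$.

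The key step is then to show the following. If we let $\mathcal{Z}_{\y'}$ denote the operator for the $\y$ variables other than $y_j$, then on both sides everything reduces to $\mathcal{Z}_{\y'}$ composed with $\mathcal{Z}_{y_j}$ applied after substitution. The reason $\mathcal{Z}_{y_j}$ alone can be done before or after is that $\CT_{\lambda_i,j}$ already ends with $\mathcal{Z}_{y_j}$. For the other $\y$ variables, the substitution $\lambda_i\mapsto (y_ju_j')^{-1/b_j}$ shifts their exponents by $-\beta/b_j$ times their exponent in $u_j'$. I would argue that, after $\mathcal{Z}_{y_j}$, the quantity $-\beta/b_j$ combined with the $y_j$-exponent must be an integer. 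Since $y_j$ has exponent $1$ in $u_j$, the shift of the exponent of $y_j$ is exactly the scalar $-\beta/b_j$ plus the original integer part. Hence after $\mathcal{Z}_{y_j}$ the multiplier $-\beta/b_j$ differs from an integer by the original fractional $y_j$-exponent.

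I expect the main obstacle to be making this integrality transfer rigorous on the left-hand side, where $\mathcal{Z}_{\y}$ is applied before the substitution. There the $y_j$-exponent is already integral, so $-\beta/b_j\in\Z$ after $\mathcal{Z}_{y_j}$. Integer multiples of the (possibly fractional) $\y'$-exponents of $u_j'$ then need not preserve integrality unless we also track that $\mathcal{Z}_{\y'}$ has already been imposed on the whole monomial. I would handle this with the uniqueness of the expansion in the iterated Laurent field (Proposition~\ref{prop-singleCT}, independence under $\lambda\mapsto\lambda^m$). Concretely, I would clear denominators by substituting $\lambda_i\mapsto\lambda_i^m$ for a suitable $m$, so that every exponent involved is integral and both operators become finite sums over roots of unity, as in Lemma~\ref{lem-unit-E}. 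In that root-of-unity form, $\mathcal{Z}_{\y}$ is an averaging over $\y\mapsto\omega\y$, and it visibly commutes with the averaging over $\zeta$ defining $\CT_{\lambda_i,j}$. Checking that these two averagings are indexed independently is the crux.
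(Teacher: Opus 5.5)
\textbf{Verdict: genuine gap.} The paper gives no proof of this proposition; it imports it from \cite{XXZ25}. Your argument therefore has to stand on its own, and as written it does not. The part you carry out rests on invalid bookkeeping, and the part that would work is only announced. There are three problems.

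\begin{enumerate}
\item[(i)] \textbf{$\CT_{\lambda_i,j}$ is not termwise.} In Definition~\ref{defn-CT-Z}, the substitution $\lambda_i=(y_ju_j')^{-1/b_j}$ is made in the rational function $E(1-u_j\lambda_i^{b_j})$, which is then re-expanded. The substitution can turn large monomials into small ones. For example, with the order $[t,q,y_1,\dots]$, $u_j=y_jt^{-1}$ and $b_j=1$, the map $q\mapsto t/y_j$ sends the large monomial $y_sqt^{-1}$ to $y_s/y_j$, which is small when $s<j$. So substituting into the series term by term, as in your second and third paragraphs, computes neither side.

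\item[(ii)] \textbf{The multiplication step is skipped.} Your reduction to ``the substitution step'' ignores multiplication by $1-u_j\lambda_i^{b_j}$. Since $u_j'$ generally carries fractional powers of $y_\ell$, $\ell\in J$, $\mathcal{Z}_{\y}$ does not commute with this multiplication. Your left-hand analysis imposes integrality on monomials of $E(1-u_j\lambda_i^{b_j})$ instead of on those of $E$, where the factor $x_j^{k_j}$ still contributes $k_j\gamma_j$. That is exactly why you find an integrality mismatch. It is an artifact of the bookkeeping, not something to be ``transferred''.

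\item[(iii)] \textbf{The left side is not defined by Definition~\ref{defn-CT-Z}.} You verify the definition's hypothesis only for $[M_k\langle I;J\rangle]$, i.e.\ for the right side. Suppose the $j$-th and $s$-th denominator monomials $x_j,x_s$ both carry $y_\ell^{1/2}$. Then $\mathcal{Z}_\y$ turns $1/((1-x_j)(1-x_s))$ into $(1+x_jx_s)/((1-x_j^2)(1-x_s^2))$. Now $y_j$ no longer occurs only in a single factor $1-y_ju_j'\lambda_i^{b_j}$.
\end{enumerate}

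\textbf{What is missing is an equivariance statement,} not ``two independent averagings''. Here is how it goes.

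\begin{itemize}
\item[$\circ$] \emph{Setup.} Choose $m$ so that all $y_J$-exponents of $[M_k\langle I;J\rangle]$ lie in $\frac1m\Z$. Let $G=\mu_m^{J}$, where $\mu_m$ is the group of $m$-th roots of unity, act by $y_\ell^{1/m}\mapsto\omega_\ell y_\ell^{1/m}$. Then $\mathcal{Z}_\y[M_k\langle I;J\rangle]=\frac{1}{|G|}\sum_{g\in G}g[M_k\langle I;J\rangle]$. Each summand has the form \eqref{e-integer-form} with $u_j$ replaced by $y_j\eta_gu_j'$ for some root of unity $\eta_g$. The left side is then defined by linearity.

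\item[$\circ$] \emph{The lift.} Take any lift $\tilde g$ of $g$ to the finer roots of the $y_\ell$, $\ell\in J$, that fixes all fractional powers of $y_j$ and of the $\lambda$'s. It is a field automorphism that multiplies each monomial by a root of unity. Hence it commutes with series expansion and with $\mathcal{Z}_{y_j}$.

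\item[$\circ$] \emph{Equivariance.} $\tilde g\big((y_ju_j')^{-1/b_j}\big)$ is some $b_j$-th root of $(y_j\eta_gu_j')^{-1}$, and $\mathcal{Z}_{y_j}$ makes the choice of root irrelevant. Therefore $\CT_{\lambda_i,j}\big(g[M_k\langle I;J\rangle]\big)=\tilde g\,\CT_{\lambda_i,j}[M_k\langle I;J\rangle]$. In particular, the $g$-average and your $\zeta$-average are coupled through $\eta_g$, not independent.

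\item[$\circ$] \emph{Closing the argument.} We have $\CT_{\lambda_i,j}[M_k\langle I;J\rangle]=\sgn(b_j)\mathcal{Z}_{y_j}[M_{k+1}\langle I\cup\{i\};J\cup\{j\}\rangle]$. Each surviving monomial of this has an integral $y_j$-exponent $e$. Its $y_J$-exponent is $\gamma^{(J)}_{n+1}+\sum_s k_s\gamma^{(J)}_s+e\,\gamma^{(J)}_j\in\frac1m\Z^J$ with $k_s\in\Z$, where $\gamma^{(J)}_s$ is the $y_J$-part of the $s$-th column of $M_k\langle I;J\rangle$. So $\tilde g$ acts on these monomials exactly as $g$ does. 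Averaging over $G$ is then precisely $\mathcal{Z}_\y$, which gives the right side.
\end{itemize}

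The substitution $\lambda_i\mapsto\lambda_i^m$ you propose only deals with fractional $\lambda_i$-exponents. The denominators that matter here are in the $\y$-exponents.
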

Finally, the commutativity of these operators leads to our key lemma.
\begin{lem}[\cite{XXZ25}]\label{lem-CT-cone}
Suppose $M$ is given in \eqref{equ-M}. Then for $k\leq r$, we have
$$\CT\limits_{\lambda_{i_k},j_k}\cdots\CT\limits_{\lambda_{i_2},j_2}\CT\limits_{\lambda_{i_1},j_1} [M]=\sgn\left(A\begin{pmatrix}
i_1,\dots,i_{k}\\
j_1,\dots,j_{k}
\end{pmatrix}\right)\mathcal{Z}_{\y}[M_k\langle \{i_1,\dots,i_{k}\};\{j_1,\dots,j_{k}\}\rangle],$$
provided that both sides are well-defined.
\end{lem}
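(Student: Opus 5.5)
We argue by induction on $k$. The key observation is that one application of $\CT_{\lambda_i,j}$ in Definition \ref{defn-CT-Z} is exactly one step of Gaussian column elimination on the exponent matrix. Proposition \ref{prop-commu} then lets us pull $\mathcal{Z}_{\y}$ outside at each stage.

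\emph{Base case $k=1$.} By \eqref{equ-inhomo} and \eqref{equ-Psi}, $[M]=\Lambda^{-\b}/\prod_{s}(1-(\y,\Lambda)^{\gamma_s})$, where $\gamma_s$ is the $s$-th column of $M$. In the factor $1-y_{j}\Lambda^{A_{j}}$ the variable $y_{j}$ appears only through $u_{j}=y_{j}u_j'$, and as a function of $\lambda_{i}$ this factor has exponent $b_{j}=m_{n+i,j}$, which we assume nonzero. So Definition \ref{defn-CT-Z} applies:
\begin{itemize}
\item multiply $[M]$ by this factor, which removes it;
\item substitute $\lambda_{i}=(y_{j}u_j')^{-1/m_{n+i,j}}$;
\item apply $\mathcal{Z}_{y_{j}}$ and multiply by $\sgn(m_{n+i,j})$.
\end{itemize}
Under the substitution, every monomial $(\y,\Lambda)^{\gamma_s}$, including the numerator column $\gamma_{n+1}$, becomes $(\y,\Lambda)^{\gamma_s-\frac{m_{n+i,s}}{m_{n+i,j}}\gamma_j}$. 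This is precisely the column operation defining $M\leftarrow(i,j)$. The $\lambda_{i}$-row of the result is zero, matching the deletion of row $n+i$, and column $j$ has been dropped, matching the convention in \eqref{equ-Psi}. Hence $\CT_{\lambda_i,j}[M]=\sgn(m_{n+i,j})\,\mathcal{Z}_{y_j}[M_1\langle\{i\};\{j\}\rangle]$. The exponents of the other $y_s$ are unaffected in their integrality, and $\mathcal{Z}_{\y}$ includes $\mathcal{Z}_{y_j}$. So we may replace $\mathcal{Z}_{y_j}$ by $\mathcal{Z}_{\y}$ here; equivalently, we prove the stronger statement with $\mathcal{Z}_{\y}$ throughout, using that the $\mathcal{Z}$ operators are commuting idempotents.

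\emph{Inductive step.} Assume the formula holds for $k$, and apply $\CT_{\lambda_{i_{k+1}},j_{k+1}}$ with $i_{k+1}\notin I$ and $j_{k+1}\notin J$. By Proposition \ref{prop-commu} we can move the operator past $\mathcal{Z}_{\y}$. It then acts on $[M_k\langle I;J\rangle]$, which has the same shape as $[M]$, with the following properties:
\begin{itemize}
\item The upper $n$ rows of column $s\notin J$ equal $e_s$ plus a combination of the columns indexed by $J$ only. So $y_{j_{k+1}}$ occurs only in the denominator factor of column $j_{k+1}$, and the hypothesis of Definition \ref{defn-CT-Z} holds.
\item The relevant exponent is the current pivot $p_{k+1}$, which is nonzero by well-definedness.
\end{itemize}
The base-case computation therefore applies verbatim. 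It yields $\sgn(p_{k+1})\,\mathcal{Z}_{y_{j_{k+1}}}[M_{k+1}\langle I\cup\{i_{k+1}\};J\cup\{j_{k+1}\}\rangle]$, and this $\mathcal{Z}$ is absorbed into the outer $\mathcal{Z}_{\y}$.

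Multiplying the signs gives $\sgn(p_1)\cdots\sgn(p_{k+1})=\sgn(p_1\cdots p_{k+1})$. By the stated fact about pivots this equals $\sgn\left(A\begin{pmatrix} i_1,\dots,i_{k+1}\\ j_1,\dots,j_{k+1}\end{pmatrix}\right)$, which completes the induction.

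The main obstacle is justifying the commutation of the substitution-and-extraction step with the earlier $\mathcal{Z}_{\y}$ when fractional exponents appear. This is exactly the content of Proposition \ref{prop-commu}, so the remaining work is bookkeeping. One must check that the substitution acts on exponent vectors linearly, as column operations, including on the numerator column. One must also check that the ``$y_j$ appears only at $u_j$'' condition is preserved by elimination. Both follow from the identity block structure of $M$.
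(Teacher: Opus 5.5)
The paper does not reprove this lemma; it quotes it from \cite{XXZ25} and remarks only that it follows from the commutativity in Proposition~\ref{prop-commu}. Your proposal is correct and is essentially that argument: induction on $k$, realizing each $\CT_{\lambda_i,j}$ via Definition~\ref{defn-CT-Z} as one Gaussian column-elimination step (numerator column included), commuting it past $\mathcal{Z}_{\y}$ by Proposition~\ref{prop-commu}, absorbing the new $\mathcal{Z}_{y_{j_{k+1}}}$, and multiplying pivot signs into the sign of the minor, with the right check that $y_{j_{k+1}}$ still appears only, to the first power, in its own denominator factor.
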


\subsection{Refined Algorithm}\label{subsec-refinedalg}
Following the notation from Section \ref{sec-Bbounded}, we introduce an auxiliary variable $y_{n+1}$ and define the new function
\begin{align}
 \bar{\sigma}_{(b;\a)}=&-\CT_{t}\left(\CT_{q}\frac{q^{v}\cdot t^{-\lfloor\frac{b}{M}\rfloor-1+u}}{\underline{(1-q^{r_{1}}t^{p_{1}}y_{1})\cdots(1-q^{r_{n}}t^{p_{n}}y_{n})}(1-y_{n+1}q^{M}/t^{N})}\right),\label{barF}
\end{align}
where $u$ and $v$ are chosen as in Proposition \ref{prop-LAprobqtformula}. Note that
\begin{align}
\left.\bar{\sigma}_{(b;\a)}\right|_{y_{n+1}=1}=\sigma_{(b;\a)} \quad\text{and}\quad d(b;\a)=\bar{\sigma}_{(b;\a)}(\mathbf{1}).\label{dval}
\end{align}
Furthermore, we recall that our working field is the field of iterated Laurent series with the variable ordering $\texttt{vars}=[t,q,y_1,\dots,y_n,y_{n+1}]$ given in \eqref{Ite-Laurent}.

\begin{thm}\label{Thm-Center}
Let $n \in \Z_{+}$ and consider a sequence of positive integers $\a = (a_1, \dots, a_n)$, where $a_i = p_i M + r_i N$.
Assume $M, N \in \Z_{+}$ are coprime, $p_i, r_i \in \Z$, and the vectors $(p_1, \dots, p_n)$ and $(r_1, \dots, r_n)$ are linearly independent.
For a given $b \in \N$, we fix integers $u,v$ satisfying $uM + vN = uM+vN=-b+M\lfloor b/M\rfloor+M$ and $0 < v - \sum_{i: r_i < 0} r_i \leq M$.
We define the block matrix $A$ as
\[A:=
\left(
\begin{array}{cccc|c}
1 & \cdots & 0 & 0 & 0 \\
\vdots & \ddots & \vdots & \vdots & \vdots \\
0 & \cdots & 1 & 0 & 0\\
0 & \cdots & 0 & 1 & 0\\
\hline
r_{1} & \cdots & r_{n} & M & v\\
p_{1} & \cdots & p_{n} & -N & -\lfloor\frac{b}{M}\rfloor-1+u
\end{array}
\right),
\]
Next, define the index sets $J := [n] \setminus I$ and
$$I:=\left\{i:r_{i}\neq0, -\frac{vp_{i}}{r_{i}}-\lfloor\frac{b}{M}\rfloor-1+u-\sum\limits_{1\leq j\leq n}\chi(\frac{D_{i,j}}{r_{i}}<0)\frac{D_{i,j}}{r_{i}}+\chi(r_{i}>0)\frac{a_{i}}
{r_{i}}\leq0\right\}$$
where $\chi$ is the standard indicator function.
For each $i \in I$, let $A^{(i)}$ be the matrix obtained by subtracting $s_i$ times the $(n+1)$-th column of $A$ from its $(n+2)$-th column, where
$$s_i =
\begin{cases}
\left\lfloor \frac{1}{a_i}\left(vp_i + \left(\lfloor \frac{b}{M} \rfloor + 1 - u\right)r_i + \sum_{j: D_{i,j}<0}D_{i,j}\right) \right\rfloor, & \text{if } r_i > 0,\\[6pt]
\left\lceil \frac{1}{a_i}\left(vp_i + \left(\lfloor \frac{b}{M} \rfloor + 1 - u\right)r_i + \sum_{j: D_{i,j}>0}D_{i,j}\right) \right\rceil - 1, & \text{if } r_i < 0.
\end{cases}$$
Finally, denoting $\langle i,j \rangle := \CT\limits_{t,j}\CT\limits_{q,i}[A]$ and $\langle i,j \rangle^{\prime} := \CT\limits_{t,j}\CT\limits_{q,i}[A^{(i)}]$, we obtain the identity
\begin{equation}\label{equa-sumij}
\bar{\sigma}_{(b;\a)} = -\sum_{i\in I, j\in J}\langle i,j\rangle + \sum_{i\in I, 1\leq j\leq n}\langle i,j\rangle^{\prime}.
\end{equation}
\end{thm}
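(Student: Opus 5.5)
We begin from the definition \eqref{barF}. Here $\bar{\sigma}_{(b;\a)}=-\CT_t\CT_q[A]$, in which every denominator factor $1-q^{r_j}t^{p_j}y_j$ is underlined. Each factor with $r_j\neq0$ therefore contributes a single $\CT_{q,j}$ term, and factors with $r_j=0$ contribute nothing (Definition \ref{defn-A}). Hence $\bar{\sigma}_{(b;\a)}=-\sum_{i:r_i\neq0}\CT_t\CT_{q,i}[A]$, where by Proposition \ref{prop-singleCT} we may pass from the classical underlined extraction to the operator $\CT_{q,i}$. By Lemma \ref{lem-CT-cone} with $k=1$, $\CT_{q,i}[A]$ is, up to sign, $\mathcal{Z}_{\y}[A\leftarrow(1,i)]$. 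This is a rational function in $t$ whose $t$-exponents are $D_{i,j}/r_i$ for $j\ne i$ and $-a_i/r_i$ for the column of the factor $1-y_{n+1}q^M/t^N$. Its numerator $t$-exponent is $-\frac{vp_i}{r_i}-\lfloor b/M\rfloor-1+u$, and the sign $\sgn(r_i)$ is absorbed as in Definition \ref{defn-CT-Z}.

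The first step is to discard $i\notin I$. I would expand every factor as a series in $t$ according to the working field (small or large monomials, with $t$ first), and bound from below the minimal possible $t$-exponent of the expansion. The numerator exponent plus the contributions $-\frac{D_{i,j}}{r_i}$ for each large factor ($D_{i,j}/r_i<0$), plus the factor $\frac{a_i}{r_i}$ when $r_i>0$ (the $M$-column factor is then large), is exactly the quantity in the definition of $I$. If it is positive, the constant term in $t$ vanishes. This is the argument behind \eqref{inequ-<0}, now phrased before the substitution $t\mapsto t^{|r_i|}$. Proposition \ref{prop-singleCT} justifies that this substitution is harmless, so the $\mathcal{Z}$-form agrees with the root-of-unity form \eqref{equ-onlytwithoutq}.

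For $i\in I$, I apply Corollary \ref{cor-contri-dual} in the variable $t$. Write $E_i=\CT_{q,i}[A]$. I would split $E_i$ according to whether the large factor $1-y_{n+1}t^{-a_i/r_i}(\cdots)$ can be used to shift the numerator. The identity $\frac{1}{1-X}=\frac{X^{s}}{1-X}+\sum_{k=0}^{s-1}X^k$, applied to the $(n+1)$-th column, gives $[A]=[A^{(i)}]+(\text{polynomial part in }X)$. Column operations subtracting $s_i$ times column $n+1$ from column $n+2$ give exactly $[A^{(i)}]$ up to a finite geometric sum. The choice of $s_i$, floor for $r_i>0$ and ceiling minus one for $r_i<0$, is made so that after the shift $E^{(i)}:=\CT_{q,i}[A^{(i)}]$ is proper in $t$ and vanishes at $t=0$. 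These are precisely the hypotheses of \eqref{equ-matrixproperandlim=0}.

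Then $\sum_{\text{all }j}\CT_{t,j}E^{(i)}=0$. Meanwhile, $\CT_t$ of the original term is obtained via \eqref{equ-matrixproper} as a sum over the small factors of $E_i$, which are the columns $j\in J$ (the $y_j$ factors), while the auxiliary $(n+1)$-column piece is expressed through the primed terms. Combining these, $-\CT_tE_i=-\sum_{j\in J}\langle i,j\rangle+\sum_{j}\langle i,j\rangle'$. Summing over $i\in I$ gives \eqref{equa-sumij}.

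The main obstacle is this last bookkeeping step. We must verify that the shift by $s_i$ simultaneously makes the $A^{(i)}$ term satisfy properness and vanishing at $t=0$, in both sign cases of $r_i$, and that the polynomial remainder from the geometric sum contributes exactly the $\langle i,j\rangle$ terms with the stated signs and index ranges. I would first check this directly on the one-variable picture after $t\mapsto t^{|r_i|}$, using degree counts $\deg_t$ together with the $D_{i,j}$ sums in $s_i$. I would then transport the result back via Lemma \ref{lem-CT-cone}, tracking $\sgn$ of the $2\times2$ minors $\sgn(r_iX-\dots)$.
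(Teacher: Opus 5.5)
Your first two steps match the paper: expanding $\bar{\sigma}_{(b;\a)}$ over the $i$ with $r_i\neq 0$, passing to $\mathcal{Z}_{\y}[A_1\langle\{1\};\{i\}\rangle]$, and discarding $i\notin I$ by the lowest $t$-exponent count. The gap is in how you obtain $J$.

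For each fixed $i\in I$ you claim $-\CT_t E_i=-\sum_{j\in J}\langle i,j\rangle+\sum_j\langle i,j\rangle'$, on the grounds that the small factors of $E_i$ are exactly the columns $j\in J$. No such identification holds. Whether the $j$-th factor of $E_i$ is small in $t$ depends on the sign of $D_{i,j}/r_i$. By contrast, $j\in J$ means $r_j=0$ or that $E_j$ has positive lowest $t$-exponent, which is a condition on a different function altogether.

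In fact $t$ is the leading variable, so every factor becomes small once written with positive $t$-exponent; the sign is carried by $\sgn(b_j)$ in Definition~\ref{defn-CT-Z}. Accordingly the paper applies \eqref{equ-matrixproper} to the proper function $E_i$ over all columns: $\CT_tE_i=\sum_{j=1}^{n+1}\langle i,j\rangle$. The correct per-$i$ identity is therefore $-\CT_tE_i=-\sum_{j=1}^{n}\langle i,j\rangle+\sum_{j=1}^{n}\langle i,j\rangle'$. The terms $\langle i,j\rangle$ with $j\in I\setminus\{i\}$ do not vanish individually, so your per-$i$ statement is false in general.

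The missing idea is the antisymmetry $\langle i,j\rangle+\langle j,i\rangle=0$, which follows from Lemma~\ref{lem-CT-cone}. Both terms are $\pm\mathcal{Z}_{\y}$ of the same doubly eliminated function, because elimination with pivot columns $\{i,j\}$ does not depend on the order. Their signs are $\sgn(D_{i,j})$ and $\sgn(D_{j,i})=-\sgn(D_{i,j})$. Hence $\sum_{i,j\in I}\langle i,j\rangle=0$, and the restriction to $j\in J$ appears only after summing over $i\in I$.

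For the $(n+1)$-st column, the paper observes that $\langle i,n+1\rangle=\langle i,n+1\rangle'$. The reason is that the numerator column of $A^{(i)}$ differs from that of $A$ by a multiple of the pivot column $n+1$, which disappears after elimination. It then applies \eqref{equ-matrixproperandlim=0} to $A^{(i)}$ to get $\langle i,n+1\rangle'=-\sum_{j=1}^n\langle i,j\rangle'$.

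Your geometric-series split $[A]=[A^{(i)}]+R$ can be made to give the same per-$i$ identity. Indeed $\CT_t\CT_{q,i}[A^{(i)}]=0$, and $R$ has no pole at the $(n+1)$-st factor, so $\CT_t\CT_{q,i}R=\sum_{j=1}^n(\langle i,j\rangle-\langle i,j\rangle')$. But this produces the full sum over $j\le n$, not a $J$-restricted one, so the cancellation across different $i$ is still needed.
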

\begin{proof}

Recalling the definition in \eqref{equ-Psi}, we have
$$[A]=\frac{q^{v}\cdot t^{-\lfloor\frac{b}{M}\rfloor-1+u}}{(1-q^{r_{1}}t^{p_{1}}y_{1})\cdots(1-q^{r_{n}}t^{p_{n}}y_{n})(1-y_{n+1}q^{M}/t^{N})}.$$
Invoking equation \eqref{barF}, Proposition \ref{prop-singleCT}, and Lemma \ref{lem-CT-cone} yields
\begin{align*}
  \bar{\sigma}_{(b;\a)}=&-\CT_{t}\left(\sum_{i:r_{i}\neq0}\CT_{q}\frac{q^{v}\cdot t^{-\lfloor\frac{b}{M}\rfloor-1+u}}{\underline{(1-q^{r_{i}}t^{p_{i}}y_{i})}\cdot\prod\limits_{j\neq i}(1-q^{r_{j}}t^{p_{j}}y_{j})(1-y_{n+1}q^{M}/t^{N})}\right)\\
  =&-\sum_{i:r_{i}\neq0}\CT_{t}\CT_{q,i}[A]=-\sum_{i:r_{i}\neq0}\CT_{t} \sgn(r_{i})\mathcal{Z}_{\y}[A_1\langle \{1\};\{i\}\rangle]\\
  =&-\sum_{i:r_{i}\neq0}\sgn(r_{i})\mathcal{Z}_{\y}\CT_{t}[A_1\langle \{1\};\{i\}\rangle],
\end{align*}
where
\begin{align*}
  [A_1\langle \{1\};\{i\}\rangle]&=\frac{ t^{-\frac{vp_{i}}{r_{i}}-\lfloor\frac{b}{M}\rfloor-1+u}y_{i}^{-v/r_{i}}}{\prod_{j=1}^{i-1}(1-t^{\frac{D_{i,j}}{r_{i}}}y_{j}y_{i}^{-\frac{r_{j}}{r_{i}}})(1)\prod_{j=i+1}^{n}(1-t^{\frac{D_{i,j}}{r_{i}}}y_{j}y_{i}^{-\frac{r_{j}}{r_{i}}})(1-t^{-\frac{a_{i}}{r_{i}}}y_{n+1}y_{i}^{-\frac{M}{r_{i}}})}.
\end{align*}
Observe that if
$$\deg_{t} [A_1\langle \{1\};\{i\}\rangle]=-\frac{vp_{i}}{r_{i}}-\lfloor\frac{b}{M}\rfloor-1+u-\sum\limits_{1\leq j\leq n}\chi(\frac{D_{i,j}}{r_{i}}<0)\frac{D_{i,j}}{r_{i}}+\chi(r_{i}>0)\frac{a_{i}}{r_{i}}>0,$$ it immediately follows that $\CT\limits_{t}[A_1\langle \{1\};\{i\}\rangle]=0$. Consequently, the summation reduces to
$$\bar{\sigma}_{(b;\a)}=-\sum_{i\in I}\sgn(r_{i})\mathcal{Z}_{\y}\CT_{t}[A_1\langle \{1\};\{i\}\rangle].$$
For every $i\in I$, it is clear that $[A_1\langle \{1\};\{i\}\rangle]$ is proper with respect to $t$. In this case, applying equation \eqref{equ-matrixproper} from Corollary \ref{cor-contri-dual} and Proposition \ref{prop-commu} gives
\begin{align*}
 \sgn(r_{i})\mathcal{Z}_{\y}\CT_{t}[A_1\langle \{1\};\{i\}\rangle]&=\sum_{1\leq j\leq n+1}\sgn(r_{i})\mathcal{Z}_{\y}\CT_{t,j}[A_1\langle \{1\};\{i\}\rangle]\\
 &=\sum_{1\leq j\leq n+1}\CT_{t,j}\sgn(r_{i})\mathcal{Z}_{\y}[A_1\langle \{1\};\{i\}\rangle]\\
 &=\sum_{1\leq j\leq n+1}\CT\limits_{t,j}\CT\limits_{q,i}[A]=\sum_{1\leq j\leq n+1}\langle i,j\rangle.
\end{align*}
  Thus, we obtain
  \begin{align}
  \bar{\sigma}_{(b;\a)}=&-\sum_{i\in I, 1\leq j\leq n+1}\langle i,j\rangle\notag\\
  =&-\sum_{i\in I,1\leq j\leq n}\langle i,j\rangle-\sum_{i\in I}\langle i,n+1\rangle\notag\\
  =&-\sum_{i\in I,j\in J}\langle i,j\rangle-\sum_{i\in I}\langle i,n+1\rangle,\label{ijoffset}
  \end{align}
  where the last equality follows from the fact that $\langle i,j\rangle+\langle j,i\rangle=0$ by Lemma \ref{lem-CT-cone}.

 Clearly, $\langle i,n+1\rangle = \langle i,n+1\rangle^{\prime}$ for every $i \in I$; thus, it follows that
  \begin{align*}
  \langle i,n+1\rangle=\langle i,n+1\rangle^{\prime}=\CT\limits_{t,n+1}\CT\limits_{q,i}[A^{(i)}]\notag=\mathcal{Z}_{\y}\CT\limits_{t,n+1}[A^{(i)}_1\langle \{1\};\{i\}\rangle],
  \end{align*}
  where the rational function
  \begin{align*}
   [A^{(i)}_1\langle \{1\};\{i\}\rangle] =&\frac{ t^{-\frac{vp_{i}}{r_{i}}-\lfloor\frac{b}{M}\rfloor-1+u+s_{i}\frac{a_{i}}{r_{i}}}y_{i}^{(-v+Ms_{i})/r_{i}}y_{n+1}^{-s_{i}}}{\prod_{j=1}^{i-1}(1-t^{\frac{D_{i,j}}{r_{i}}}y_{j}y_{i}^{-\frac{r_{j}}{r_{i}}})(1)\prod_{j=i+1}^{n}(1-t^{\frac{D_{i,j}}{r_{i}}}y_{j}y_{i}^{-\frac{r_{j}}{r_{i}}})(1-t^{-\frac{a_{i}}{r_{i}}}y_{n+1}y_{i}^{-\frac{M}{r_{i}}})}
  \end{align*}
is proper with respect to $t$ and evaluates to $0$ at $t=0$ for this special value of $s_{i}$. Hence, equation \eqref{equ-matrixproperandlim=0} in Corollary \ref{cor-contri-dual} implies
 \begin{align*}
   \langle i,n+1\rangle=\mathcal{Z}_{\y}\CT\limits_{t,n+1}[A^{(i)}_1\langle \{1\};\{i\}\rangle]=-\sum_{j=1}^{n}\mathcal{Z}_{\y}\CT\limits_{t,j}[A^{(i)}_1\langle \{1\};\{i\}\rangle]=-\sum_{j=1}^{n}\langle i,j\rangle^{\prime}.
 \end{align*}

 Finally, substituting this back into equation \eqref{ijoffset} yields
  $$\bar{\sigma}_{(b;\a)}=-\sum_{i\in I,j\in J}\langle i,j\rangle+\sum_{i\in I,1\leq j\leq n}\langle i,j\rangle^{\prime}.$$

This completes the proof.
\end{proof}

We now evaluate an individual term $\langle i,j\rangle$ or $\langle i,j\rangle^{\prime}$ in \eqref{equa-sumij}. By Lemma \ref{lem-CT-cone}, if $D_{i,j} = 0$, then both $\langle i,j\rangle = 0$ and $\langle i,j\rangle^{\prime} = 0$. Thus, we may assume $D_{i,j} \neq 0$ in what follows.

\begin{prop}
 Each term in \eqref{equa-sumij}, whether $\langle i,j\rangle$ or $\langle i,j\rangle^{\prime}$, can be expressed as a sum of $|D_{i,j}|$ terms. Each of these summands takes the form
\begin{equation}\label{equ-finalformula}
\frac{1}{D_{i,j}}\frac{L_{0}\xi_{0}}{\prod_{k=1}^{n-1}(1-L_{k}\xi_{k})},
\end{equation}
where $\xi_0,\dots,\xi_{n-1}$ are $|D_{i,j}|$-th roots of unity, and $L_0,\dots,\L_{n-1}$ are monomials in $y_1,\dots,y_{n+1}$.
\end{prop}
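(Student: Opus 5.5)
The plan is to compute $\langle i,j\rangle$ and $\langle i,j\rangle^{\prime}$ through Lemma \ref{lem-CT-cone} and then unpack the operator $\mathcal{Z}_{\y}$ by averaging over roots of unity. The term $\langle i,j\rangle'$ differs from $\langle i,j\rangle$ only in the last column of the matrix, since $A^{(i)}$ is obtained from $A$ by a column operation on the constant column. It therefore suffices to treat $\langle i,j\rangle=\CT\limits_{t,j}\CT\limits_{q,i}[A]$.

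First, by Lemma \ref{lem-CT-cone} with $k=2$, rows $(1,2)$ (the $q$ and $t$ rows) and columns $(i,j)$, we get
$$\langle i,j\rangle=\sgn(D_{i,j})\,\mathcal{Z}_{\y}[A_2\langle\{1,2\};\{i,j\}\rangle].$$
Here the $2\times 2$ minor is $r_ip_j-r_jp_i=D_{i,j}$, up to a sign convention fixed by the pivot order. By the pivot-product remark preceding Lemma \ref{lem-CT-cone}, Cramer's rule applied to the two eliminated rows shows that every entry of the resulting columns in the $\y$-rows lies in $\frac{1}{D_{i,j}}\Z$. Concretely, in each surviving column $k\notin\{i,j\}$, including the constant column, the exponents of $y_i$ and $y_j$ are (negated) Cramer ratios with common denominator $D_{i,j}$, and the exponents of the other $y$'s stay integers. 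So $[A_2\langle\{1,2\};\{i,j\}\rangle]$ is a rational function of the form
$$\frac{\y^{\beta_0}}{\prod_{k=1}^{n-1}(1-\y^{\beta_k})},$$
with $n-1$ denominator factors (the $n+1$ original columns minus the two pivots) and $\beta_k\in\frac{1}{D_{i,j}}\Z^{n+1}$, with the fractional parts confined to the $y_i,y_j$ coordinates.

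Next we evaluate $\mathcal{Z}_{\y}$. Substitute $y_i\mapsto y_i^{D}$ and $y_j\mapsto y_j^{D}$ with $D=|D_{i,j}|$, so all exponents become integral. Extracting the terms whose $y_i$- and $y_j$-exponents are divisible by $D$ is then the filter
$$\frac{1}{D^2}\sum_{\omega_1^D=\omega_2^D=1}F(\omega_1y_i,\omega_2y_j).$$
This naive filter gives $D^2$ terms, but the statement claims only $|D_{i,j}|$ terms. The key observation is that the exponent vectors of $y_i,y_j$ all lie in the lattice generated by $\Z^2$ together with a single vector $(r_\cdot,p_\cdot)$-type direction over $D$. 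Indeed, the fractional exponent parts come from $(p_j,-p_i)/D$ and $(-r_j,r_i)/D$-combinations, and since the monomial exponents are integral linear images of $\Z^2$ through a $2\times 2$ matrix of determinant $D$, the quotient $\frac{1}{D}\Z^2$-part actually visited is a cyclic group of order dividing $D$. More precisely, I would write each monomial as $\y^{\beta}$ with $\beta=\beta'+\frac{1}{D}(\ell\cdot w)$ for a fixed vector $w$, and show that the character group of $\Lambda/\Z^{n+1}$ has order $D$. Here $\Lambda$ is the lattice generated by the $\beta_k$, whose index over $\Z^{n+1}$ is the determinant $D$. Then
$$\mathcal{Z}_{\y}G=\frac{1}{D}\sum_{\chi}\chi\cdot G$$
over the $D$ characters of $\Lambda/\Z^{n+1}$, evaluated monomialwise. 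Each character sends $\y^{\beta_k}$ to $\xi_k\y^{\beta_k}$ with $\xi_k$ a $D$-th root of unity. This yields exactly $|D_{i,j}|$ summands of the form \eqref{equ-finalformula}, where $L_k=\y^{\beta_k}$ after the rescaling and the sign $\sgn(D_{i,j})/|D_{i,j}|=1/D_{i,j}$ is absorbed.

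The main obstacle is justifying that $\Lambda/\Z^{n+1}$ has order exactly $|D_{i,j}|$, so that a single cyclic (or order-$D$) averaging suffices rather than $D^2$ terms. I would prove this by noting that the map $\Z^{n+1}\to\Lambda$ given by the column operations is unimodular except for the inverse of the $2\times 2$ pivot block. That block has determinant $D_{i,j}$, so $\Lambda$ contains $\Z^{n+1}$ with index $|D_{i,j}|$ (this is the standard Smith-normal-form index of $\binom{r_i\ r_j}{p_i\ p_j}^{-1}\Z^2$ over $\Z^2$). For $\langle i,j\rangle'$ the same argument applies verbatim, because the column modification by $s_i$ is integral and does not change $\Lambda$.
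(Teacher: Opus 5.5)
Your proof is correct in substance, but it reaches the count $|D_{i,j}|$ by a different mechanism than the paper.

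\textbf{Where the two routes agree and differ.} Both begin the same way: Lemma \ref{lem-CT-cone} gives $\langle i,j\rangle=\sgn(D_{i,j})\,\mathcal{Z}_{\y}[A_2\langle\{1,2\};\{i,j\}\rangle]$.

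The paper then takes the Smith normal form $H=US_1V=\mathrm{diag}(h_1,h_2)$ of the pivot block $S_1=\begin{pmatrix} r_i & r_j\\ p_i & p_j\end{pmatrix}$. It passes to the matrix $\hat A$ with $H$, $US_2$, $U\b$ in the bottom rows. It uses that the unimodular substitution $\mathrm{diag}(V,\mathit{id}_{n-1})\circ$ commutes with $\mathcal{Z}_{\y}$. Finally it applies Lemma \ref{lem-unit-E} twice to the now diagonal factors $1-q^{h_1}y_1$ and $1-t^{h_2}y_2$ (after relabeling $i=1$, $j=2$). This yields a double sum over $(j_1,j_2)\in[0,h_1)\times[0,h_2)$, i.e.\ $h_1h_2=|D_{i,j}|$ terms, with all roots of unity being powers of one primitive $h_2$-th root.

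You instead realize $\mathcal{Z}_{\y}$ directly as an average over the characters of a finite abelian group of order $|\det S_1|$. You then resum each geometric series using multiplicativity of characters; multiplying a monomial by a root of unity does not change whether it is small or large, so the expansions match.

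The two are the same computation: the paper's pairs $(j_1,j_2)$ enumerate exactly the dual of $\Z^2/S_1\Z^2\cong\Z/h_1\times\Z/h_2$. Your version makes the count $|D_{i,j}|$ immediate and avoids the change of variables. The paper's SNF step is what makes the characters explicit and shows that the arithmetic lives in $R_{h_2}$; you would need that step anyway to turn your filter into a concrete formula.

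\textbf{Two claims to correct.} Neither is needed for your final argument, but both are wrong as stated.

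First, the group is not cyclic in general. $\Z^2/S_1\Z^2\cong\Z/h_1\times\Z/h_2$ is cyclic only when $h_1=1$; for example, $S_1=2\,\mathit{id}_2$ gives $\Z/2\times\Z/2$. So the description $\beta=\beta'+\frac{1}{D}(\ell\cdot w)$ with a single fixed $w$ fails. The character filter works for any finite abelian group, so simply drop this claim.

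Second, $\Lambda$ should not be ``the lattice generated by the $\beta_k$''. These are at most $n$ vectors in $\Q^{n+1}$, so they cannot generate a lattice containing $\Z^{n+1}$. Even $\Z^{n+1}+\sum_k\Z\beta_k$ has index $|D_{i,j}|$ only if the non-pivot columns generate $\Z^2$ modulo $S_1\Z^2$. That is true here because $\gcd(\a)=\gcd(M,N)=1$, but you do not show it.

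Instead, define $\Lambda$ as the set of vectors in $\Q^{n+1}$ with integer coordinates off $\{i,j\}$ and $(i,j)$-part in $S_1^{-1}\Z^2$; your last paragraph effectively does this. Every exponent $\beta_0+\sum_k m_k\beta_k$, including those for $\langle i,j\rangle'$, lies in this $\Lambda$. Moreover $[\Lambda:\Z^{n+1}]=[\Z^2:S_1\Z^2]=|D_{i,j}|$ with no further hypothesis.
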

\begin{proof}
Without loss of generality, we assume $i=1$ and $j=2$ by permuting the denominator factors and renaming the $y$-variables. We now outline the procedure for computing $\langle 1,2 \rangle$. The computation for $\langle 1,2 \rangle^{\prime}$ follows a similar process.

Let $S_1=\left(
\begin{array}{cc}
r_{1} & r_{2}\\
p_{1} & p_{2}
\end{array}
\right)$,
$S_2=\left(
\begin{array}{cccc}
r_{3} & \dots & r_{n} & M\\
p_{3} & \dots & p_{n} & -N
\end{array}
\right)$,
$A=\left(
\begin{array}{ccc}
\textit{id}_{2} & \mathbf{0} & \mathbf{0}\\
\mathbf{0} & \textit{id}_{n-1} & \mathbf{0}\\
S_{1} & S_{2} & \b
\end{array}
\right)$. By Lemma \ref{lem-CT-cone}, we have
\begin{align}
\langle 1,2\rangle=\CT\limits_{t,2}\CT\limits_{q,1}[A]=\sgn(\det(S_{1}))\mathcal{Z}_{\y}[A_{2}\langle[2]
\rangle],\label{equ-unibefore}
\end{align}
where $A_{2}\langle[2]\rangle=\begin{pmatrix}
-{S_1}^{-1}S_2& -{S_1}^{-1}\b\\
\textit{id}_{n-1}&\0
\end{pmatrix}$.

We compute the Smith normal form of the submatrix $S_1$ as $H = US_1V$, where $U$ and $V$ are unimodular matrices, and $H$ is a diagonal matrix with diagonal entries $h_1$ and $h_2$ such that $h_1 \mid h_2$. Note that $h_1 h_2 = \vert{}\det(S_1)\vert{} = \vert{}r_1 p_2 - r_2 p_1\vert{}$.

Now we construct the matrix
\begin{align}\label{eq_N}
 \hat{A}=\begin{pmatrix}
         \textit{id}_{2} & \0 & \0\\
         \0 & \textit{id}_{n-1} & \0 \\
         H & U S_2  &U\b
      \end{pmatrix}
\end{align}
such that
\begin{align}\label{e-M-M^}
    \hat{A}_{2}\langle [2]\rangle = \begin{pmatrix}
-V^{-1}{S_1}^{-1}S_2&-V^{-1}{S_1}^{-1}\b\\
\textit{id}_{n-1}&\0
\end{pmatrix} = \text{diag}(V^{-1},\textit{id}_{n-1})  A_{2}\langle [2]\rangle.
\end{align}
Define the action of a nonsingular matrix \( W = (w_{ij})_{n \times n} \) by
$$ W \circ F(\y) = F(\y^{W e_1}, \y^{W e_2}, \dots, \y^{W e_n}), \quad \text{where} \quad  \y^{W e_i} = y_1^{w_{1i}} \cdots y_n^{w_{ni}}. $$
Then we have
\begin{align}
  \mathcal{Z}_{\y}[ A_{2}\langle [2]\rangle] & = \mathcal{Z}_{\y}[ \text{diag}(V,id_{n-1}) \hat A_{2}\langle [2]\rangle]  \notag\\
  & =\text{diag}(V,\textit{id}_{n-1})\circ \mathcal{Z}_{\y}[\hat A_{2}\langle [2]\rangle] \notag\\
  & =\text{diag}(V,\textit{id}_{n-1})\circ \CT_{t,2}\CT_{q,1} [\hat{A}].\label{equ-M-M^}
\end{align}
For this iterated constant term, let $\Lambda=(q,t)$, observe that
\begin{align*}
[\hat{A}] &= \frac{\Lambda^{U\b}}{(1-q^{h_{1}} y_1)(1- t^{h_{2}} y_2)\prod_{i=1}^{n-1} (1- \Lambda^{\alpha_{i+2}}y_{i+2}) },
\end{align*}
where $\alpha_{i+2}$ is the $i$-th column of $US_2$. Repeated application of Equation \eqref{equ-addifomula-A} gives
\begin{align}
\CT\limits_{t,2}\CT\limits_{q,1} [\hat{A}]&=\CT\limits_{t}\frac{1}{\underline{1-t^{h_{2}} y_2}}\CT\limits_{q} \frac{1}{\underline{1-q^{h_{1}}y_1}}\cdot\frac{\Lambda^{U\b}}{\prod_{i=1}^{n-1} (1- \Lambda^{\alpha_{i+2}}y_{i+2})}\notag\\
&=\frac{1}{h_1 h_2}\sum_{j_1=0}^{h_1-1}\sum_{j_2=0}^{h_2-1}\frac{\hat{\y}^{\left<U\b,-\mathbf{h}\right>}(\zeta_{1}^{j_1},\zeta_{2}^{j_2})^{U\b}}{\prod_{i=1}^{n-1}(1-y_{i+2}\hat{\y}^{\left<\alpha_{i+2},-\mathbf{h}\right>}(\zeta_{1}^{j_1},\zeta_{2}^{j_2})^{\alpha_{i+2}})}\notag\\
&=\frac{1}{h_1 h_2}\sum_{j_1=0}^{h_1-1}\sum_{j_2=0}^{h_2-1}\frac{\hat{\y}^{\left<U\b,-\mathbf{h}\right>}\zeta_{2}^{N_{j_1,j_2}}}{\prod_{i=1}^{n-1}(1-y_{i+2}\hat{\y}^{\left<\alpha_{i+2},-\mathbf{h}\right>}\zeta_{2}^{D_{i,j_1,j_2}})}\label{equ-unit}
\end{align}
where $\mathbf{h}=\left(\frac{1}{h_1},\frac{1}{h_2}\right),~\hat{\y}=(y_1,y_2)$, $N_{j_1,j_2}=\left<(\frac{h_2}{h_1}j_1,j_2),U\b\right>$, $D_{i,j_1,j_2}=\left<(\frac{h_2}{h_1}j_1,j_2),\alpha_{i+2}\right>$ and $\zeta_{1}=\e^{\frac{2\pi\imath}{h_1}}$ is a primitive $h_1$-th root of unity and
$\zeta_{2}=\e^{\frac{2\pi\imath}{h_2}}$ is a primitive $h_2$-th root of unity, $\zeta_1=\zeta_{2}^{\frac{h_2}{h_1}}$.

Now with equation \eqref{equ-unibefore}, \eqref{equ-M-M^} and \eqref{equ-unit}, we get
\begin{align}
\langle 1,2\rangle&=\sgn(\det(S_{1}))\text{diag}(V,\textit{id}_{n-1})\circ \CT_{t,2}\CT_{q,1} [\hat{A}]\notag\\
&=\frac{1}{D_{1,2}}\sum_{j_1=0}^{h_1-1}\sum_{j_2=0}^{h_2-1}\frac{\y^{L}
\zeta_{2}^{N_{j_1,j_2}}}{\prod_{i=1}^{n-1}(1-\y^{M_{i}}\zeta_{2}^{D_{i,j_1,j_2}})},\label{equ-12yunit}
\end{align}
which is a sum containing $h_1 h_2=|r_1 p_2 -r_2 p_1|$ terms.

This completes the proof.
\end{proof}

Having derived the explicit formula for $\bar{\sigma}_{(b;\a)}$, it remains to evaluate $\bar{\sigma}_{(b;\a)}(\mathbf{1})$ via Theorem \ref{theo-GTodd}, following the method from Proposition \ref{prop-polymialwhenbounded}.

Now we can describe the algorithm as follows.
\begin{alg}[\texttt{PolyAL}]\label{alg-HKP}
\mbox{}\\
\textbf{Input:} Coprime positive integers $M, N$; nonnegative integer $b$; linearly independent integer sequences $\mathbf{p} = (p_1, \dots, p_n)$ and $\mathbf{r} = (r_1, \dots, r_n)$ where $p_i M + r_i N > 0$ for all $i$. \\
\textbf{Output:} The value $d(b; p_1 M + r_1 N, \dots, p_n M + r_n N)$.
\begin{enumerate}
\item Determine the decomposition as specified in \eqref{equa-sumij}.
\item Rewrite each term of \eqref{equa-sumij} into the form \eqref{equ-finalformula}.
\item Select an admissible vector $\kappa = (\kappa_1, \dots, \kappa_{n+1})$ and apply the substitution $y_i = e^{\kappa_i s}$.
\item Evaluate the constant term of each substituted expression via Theorem \ref{theo-GTodd}.
\item Sum these constant terms to compute the final output.
\end{enumerate}
\end{alg}

\begin{thm}
Let $n, \Delta \in \Z_{+}$, and let $\a$ be a $\Delta$-modular A-L sequence. That is, there exist coprime positive integers $M$ and $N$ such that $a_i = p_i M + r_i N$, where the integer vectors $\mathbf{p} = (p_1, \dots, p_n)$ and $\mathbf{r} = (r_1, \dots, r_n)$ are linearly independent, and the matrix $\binom{\mathbf{p}}{\mathbf{r}}$ is $\Delta$-modular. For any nonnegative integer $b$, \texttt{PolyAL} computes $d(b;\a)$ using $O(n^4 \Delta^2 \log n \log \Delta)$ operations in $\Q$.
\end{thm}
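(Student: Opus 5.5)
The argument is a complexity count on top of correctness. Correctness follows from Theorem \ref{Thm-Center} together with \eqref{dval}: $d(b;\a)=\bar{\sigma}_{(b;\a)}(\mathbf{1})$, and $\bar{\sigma}_{(b;\a)}$ equals the signed sum \eqref{equa-sumij} of the terms $\langle i,j\rangle$ and $\langle i,j\rangle'$. The preceding proposition rewrites each such term with $D_{i,j}\neq 0$ as a sum of $|D_{i,j}|$ simple rational functions of the form \eqref{equ-finalformula}. Terms with $D_{i,j}=0$ vanish. So \texttt{PolyAL} evaluates a finite sum of simple fractions. I will fix one global admissible vector $\kappa$ for all of them (possible in polynomial time by \cite{BP99}), substitute $y_k=e^{\kappa_k s}$, and take $\CT_s$ termwise.

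First, count the summands. The index pairs $(i,j)$ in \eqref{equa-sumij} number at most $|I|\cdot|J|+|I|\cdot n\le 2n^2$. Each pair contributes at most $|D_{i,j}|\le\Delta$ summands, by $\Delta$-modularity. This gives $O(n^2\Delta)$ simple fractions in total. The preprocessing is negligible compared with the main cost: computing $I$, $s_i$, $u,v$, and the Smith normal forms of the $2\times 2$ matrices $S_1$ (entries bounded by the input) costs only polynomially many operations per pair.

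Next, bound the cost of one summand. After the substitution, a summand \eqref{equ-finalformula} becomes
\[
\frac{1}{D_{i,j}}\,\frac{e^{\ell_0 s}\xi_0}{\prod_{k=1}^{n-1}\bigl(1-e^{\ell_k s}\xi_k\bigr)},
\]
where each $\xi_k$ is a $|D_{i,j}|$-th root of unity. I will split the factors exactly as in the proof of Proposition \ref{prop-polymialwhenbounded}. A factor with $\xi_k=1$ contributes $\frac{1}{-\ell_k s}g(\ell_k s)$, where $\ell_k\neq 0$ by admissibility. A factor with $\xi_k\neq 1$ becomes $\frac{1}{1-\xi_k}\,g\bigl(\ell_k s,\tfrac{\xi_k}{1-\xi_k}\bigr)$. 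If $\tau$ factors have $\xi_k=1$, the constant term is a scalar in $R_{|D_{i,j}|}$ times $gtd_\tau$ of a Gtodd function \eqref{e-GTodd}. Its parameters are $d=\tau\le n-1$, $r+d\le n-1$ and $\sum|B_i|=n-1$, over $R_f$ with $f=|D_{i,j}|\le\Delta$. Theorem \ref{theo-GTodd} then gives a cost of $O\bigl((n^2\log n+n\log^2 n)\,\phi(f)\log\phi(f)\bigr)=O(n^2\Delta\log n\log\Delta)$ operations in $\Q$. The scalar prefactor is a product of at most $n$ elements of $R_f$, which costs $O(n\Delta\log\Delta)$ and is absorbed.

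Multiplying the $O(n^2\Delta)$ summands by $O(n^2\Delta\log n\log\Delta)$ per summand yields $O(n^4\Delta^2\log n\log\Delta)$. The final summation adds $O(n^2\Delta)$ elements, each of which can be mapped into $\Q$ (the answer is rational), so it is dominated.

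The main obstacle is justifying that a single admissible $\kappa$ works for all the simple fractions simultaneously. The denominators involve the rational exponent vectors produced by $\operatorname{diag}(V,\textit{id}_{n-1})$ and the Smith normal form, so I need to check that every $\ell_k$ with $\xi_k=1$ is nonzero. The plan is to note that all exponent vectors $M_i$ are nonzero rational vectors drawn from a polynomially sized set, namely columns of $A_2\langle\{i,j\}\rangle$ up to a fixed unimodular change. By \cite{BP99}, a $\kappa$ avoiding the orthogonal complement of each of them can then be found in polynomial time, with no effect on the operation count in $\Q$ stated in the theorem.
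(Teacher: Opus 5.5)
Your proposal is correct and follows essentially the same argument as the paper. Both proofs count $O(n^2)$ index pairs in \eqref{equa-sumij}, expand each pair into $|D_{i,j}|\le\Delta$ simple fractions, and evaluate each fraction through Theorem \ref{theo-GTodd} over $R_f$ with $\phi(f)\le\Delta$ at cost $O(n^2\Delta\log n\log\Delta)$, giving $O(n^4\Delta^2\log n\log\Delta)$ in total; your explicit Gtodd factorization, the choice of a single global admissible $\kappa$, and the preprocessing remarks only spell out what the paper leaves to the analogy with Proposition \ref{prop-polymialwhenbounded}.
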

\begin{proof}
Step 1 generates at most $n^2$ terms in \eqref{equa-sumij}, while Step 2 yields $\vert{}D_{i,j}\vert{}$ terms in \eqref{equ-12yunit}, bounded by $\Delta$. The primary computational bottleneck of the algorithm lies in Step 4.

In Step 4, we apply Theorem \ref{theo-GTodd} to evaluate
$$\CT_{s} \frac{1}{D_{i,j}} \left.\frac{L_{0}\xi_0}{\prod_{k=1}^{n-1}(1-L_{k}\xi_{k})} \right|_{y_{i}=e^{\kappa_{i}s}},$$
where $\xi_0,\dots,\xi_{n-1}$ are $|D_{i,j}|$-th roots of unity.
The analysis is completely analogous to the proof of Proposition \ref{prop-polymialwhenbounded}. Thus, we state the result directly: a single evaluation requires $O(n^2 \Delta \log n \log \Delta)$ operations over $\Q$.
Since Step 4 is executed at most $n^2 \Delta$ times, the total cost to compute $d(b;\a)$ is $O(n^4 \Delta^2 \log n \log \Delta)$ operations.

This completes the proof.
\end{proof}

Note that any $B$-bounded A-L sequence $\a$ is naturally $2B^2$-modular. Consequently, the algorithm in this section yields a complexity of $O(n^4 B^4 \log n \log B)$, which strictly improves upon the $O(n^4 B^6 \log n \log B)$ bound given in Theorem \ref{thm-crudealgocomplexity}. Furthermore, this algorithm is more general, as instances frequently arise where $\Delta$ remains relatively small even when $|p_{i}|$ and $|r_i|$ are exceptionally large.

\subsection{Applicability of the polynomial-time algorithm}\label{subsec-LLLHKP}

The polynomial-time algorithm developed in the previous subsection naturally raises the following question: for a fixed $\Delta$, how can we determine whether a given sequence $\a$ is a $\Delta$-modular A-L sequence? That is, do there exist coprime integers $M, N$ and integer vectors $\mathbf{p}, \mathbf{r}$ such that $\a = \mathbf{p}M + \mathbf{r}N$, and the matrix $\binom{\mathbf{p}}{\mathbf{r}}$ is $\Delta$-modular? Furthermore, if such parameters exist, how can we compute them efficiently?

This problem can be solved using the LLL algorithm, as we shall explain next. 

Indeed, the problem can be transformed into finding a vector $\mathbf{k}=(k_1,\dots, k_n)\in \Z^n$ so that $\binom{\a}{\mathbf{k}}$ is $\Delta$-modular.
Assume we are given two coprime positive integers $M$ and $N$. Then there exist integers $u,v$ such that $uM - vN = 1$, or $\det U=1$ where
$U=\begin{pmatrix}
              M & N \\
              v & u \\
            \end{pmatrix}$ is unimodular.
By denoting
$$T=\binom{\a}{\mathbf{k}} = \begin{pmatrix}
              M & N \\
              v & u \\
            \end{pmatrix} \binom{\mathbf{p}}{\mathbf{r}} =UP,$$
we see that $T$ and $P$ are unimodularly equivalent, yielding $\operatorname{rank}(T) = \operatorname{rank}(P) = 2$. On the other hand, if we find a $\mathbf{k}$ such that $T$ is $\Delta$-modular,
then $U$ can be any unimodular matrix with positive $M,N$.

We now construct the following matrix to determine $\mathbf{k}$.
\begin{dfn}
Let $E = \{(i,j) \in \mathbb{Z}^2 \mid 1 \le i < j \le n\}$ be the index set of pairs, equipped with the standard lexicographical order. We define $K$ to be the $\binom{n}{2} \times n$ matrix over $\R$ whose rows are indexed by $E$ and whose columns are indexed by $\{1, 2, \dots, n\}$. The entry $K_{(i,j), k}$ in row $(i,j)$ and column $k$ is given by:$$K_{(i,j), k} = \begin{cases} a_j, & \text{if } k = i, \\ -a_i, & \text{if } k = j, \\ 0, & \text{otherwise.} \end{cases}$$Equivalently, letting $\mathbf{e}_1, \dots, \mathbf{e}_n$ denote the standard basis vectors of $\R^n$, the row vector of $K$ corresponding to the index $(i,j)$ is given by $a_j \mathbf{e}_i^\top - a_i \mathbf{e}_j^\top$.
\end{dfn}

Let $\mathbf{k}=(k_1,\dots,k_n)$. By examining the structure of the matrix $K$, we observe that
$$\lVert K \cdot\mathbf{k}^\top \rVert_\infty = \max_{i,j} \vert{}k_i a_j - k_j a_i\vert{}.$$
Therefore, the problem is equivalent to the well-known Shortest Vector Problem (SVP)—namely, computing the shortest nonzero vector in the lattice generated by the columns of $K$ with respect to the $\ell_\infty$-norm.
Denoting this minimum $\ell_\infty$-norm by $\delta$, the answer to our problem is affirmative if $\delta \leq \Delta$, and negative otherwise.

It is well known that SVP is NP-hard, making the exact computation of $\delta$ intractable. A theoretical upper bound can be derived via Minkowski's First Theorem on convex bodies; we state without proof that $\delta \leq \lVert \a \rVert_2^{(n-2)/(n-1)}$ for general $\a$. Although numerical experiments on random instances confirm the tightness of this bound, it remains too large for our purposes. Consequently, the proposed algorithm does not offer a substantial advantage for arbitrary sequences $\a$.

Nevertheless, for a specific instance $\a$, the LLL algorithm can be used to efficiently approximate the shortest vector with respect to both the $\ell_2$ and $\ell_\infty$ norms. Suppose $\gamma_1, \gamma_2, \dots, \gamma_n$ form the reduced basis for the columns of $B$. By the properties of lattice reduction, each basis vector $\gamma_j$ is relatively short. In particular, we have $\delta \leq \lVert \gamma_1 \rVert_\infty \leq l(n) \delta$, where $l(n)$ is a factor depending solely on the dimension $n$. Consequently, the condition $\lVert \gamma_1 \rVert_\infty \leq \Delta$ serves as a reliable approximation for verifying whether $\delta \leq \Delta$.

\subsection{Computational Experiments and Performance}
\label{sec:experiments}
Two well-known algorithms for solving this class of problems are \texttt{LattE} and \texttt{DecDenu} \cite{xin2024combinatorial}. As noted in \cite{xin2024combinatorial}, \texttt{DecDenu} is less efficient than \texttt{LattE} for the instances considered here, which motivates the work presented in this paper. We therefore implement our algorithm as a Maple package and compare its performance with that of \texttt{LattE} on a set of test examples. The package is available at \url{https://github.com/Jinlong-Tang/AL-Prob}.

Our Maple package relies on two primary procedures: \texttt{IsAL} and \texttt{PolyAL}. The function \texttt{IsAL(a, M, N)} takes a sequence of positive integers $\a=[a_1, \dots, a_n]$ alongside coprime positive integers $M$ and $N$ to construct a good A-L representation (see Subsection~\ref{subsec-LLLHKP}). That is, it returns two linearly independent integer vectors $\mathbf{p}$ and $\mathbf{r}$ satisfying $a_i=p_i M + r_i N$ for all $i$. If $|r_ip_j-r_jp_i|$ are small for all $i,j$, then invoking $\texttt{PolyAL}(M,N,\mathbf{p},\mathbf{r},b)$ will efficiently compute the exact value of $d(b; \a)$.

Table~\ref{tab:instances} lists the 19 knapsack test instances, all of the form $a_i = p_i M + r_i N$. The first twelve (\texttt{cuww4}, \texttt{cuww5}, \texttt{prob1}--\texttt{prob10}) are taken from \cite{AL04}. Because the corresponding vectors $\mathbf{p}$ and $\mathbf{r}$ are not supplied, we recover them via the \texttt{IsAL} procedure. To explore the computational limits, we construct seven higher-dimensional instances (\texttt{AL1}--\texttt{AL7}) by uniformly sampling $M \in [10000, 20000]$, $N \in [1000, 2000]$, $p_i \in [1, 10]$, and $r_i \in [-10, 10]$. Finally, since the decomposition employed by the algorithm depends on the target sum $b$ (Theorem~\ref{Thm-Center}), we choose sufficiently large $b$ to keep the experiments general.

Table~\ref{tab:comparison} presents the computational results. Expectedly, the C++-based \texttt{LattE} runs faster on smaller instances (e.g., \texttt{cuww} and \texttt{prob1}--\texttt{prob7}) due to the inherent speed advantage of compiled C++ over interpreted Maple. Conversely, when the input scale increases, our proposed algebraic approach scales significantly better.

The key to this speedup lies in the number of rational functions. Traditional geometry-based algorithms rely on cone decomposition, generating millions of rational functions for extreme instances (e.g., 2,568,690 terms for \texttt{AL7}). By strictly bounding structural complexity, our method evaluates only 7,141 terms for the same instance. This substantial reduction demonstrates how algebraic isolation effectively controls combinatorial complexity.

{\footnotesize
\begin{longtable}{l p{0.58\textwidth} p{0.23\textwidth}}
\caption{Input data for the test problems.}
\label{tab:instances}\\
\toprule
Problem ID & $\a$ & $b$ \\
\midrule
\endfirsthead
\toprule
Problem ID & $\a$ & $b$ \\
\midrule
\endhead

\texttt{cuww4}  & 13211, 13212, 39638, 52844, 66060, 79268, 92482
   & $104723595\times 1111$ \\
\texttt{cuww5}  & 13429, 26850, 26855, 40280, 40281, 53711, 53714, 67141
   & $45094583\times 1111$ \\
\texttt{prob1}  & 25067, 49300, 49717, 62124, 87608, 88025, 113673, 119169
   & $33367335\times 1111$ \\
\texttt{prob2}  & 11948, 23330, 30635, 44197, 92754, 123389, 136951, 140745
   & $14215206\times 1111$ \\
\texttt{prob3}  & 39559, 61679, 79625, 99658, 133404, 137071, 159757, 173977
   & $58424799\times 1111$ \\
\texttt{prob4}  & 48709, 55893, 62177, 65919, 86271, 87692, 102881, 109765
   & $60575665\times 1111$ \\
\texttt{prob5}  & 28637, 48198, 80330, 91980, 102221, 135518, 165564, 176049
   & $62442884\times 1111$ \\
\texttt{prob6}  & 20601, 40429, 40429, 45415, 53725, 61919, 64470, 69340, 78539, 95043
   & $22382774\times 1111$ \\
\texttt{prob7}  & 18902, 26720, 34538, 34868, 49201, 49531, 65167, 66800, 84069, 137179
   & $27267751\times 1111$ \\
\texttt{prob8} & 17035, 45529, 48317, 48506, 86120, 100178, 112464, 115819, 125128, 129688
   & $21733990\times 1111$ \\
\texttt{prob9} & 13719, 20289, 29067, 60517, 64354, 65633, 76969, 102024, 106036, 119930
   & $13385099\times 1111$ \\
\texttt{prob10} & 45276, 70778, 86911, 92634, 97839, 125941, 134269, 141033, 147279, 153525
   & $106925261\times 1111$ \\
\texttt{AL1} & 63496, 83188, 7669, 110013, 62391, 24079, 130810, 48158, 4387, 92498, 207970
   & 12312223212331 \\
\texttt{AL2} & 186637, 138479, 38312, 134092, 30107, 90857, 72237, 103985, 134092, 92498, 91393, 101808
   & 12312223212331 \\
\texttt{AL3} & 183060, 156714, 189760, 63509, 176814, 135847, 128693, 178489, 14621, 176360, 24217, 85284, 114072
   & 12312223212331 \\
\texttt{AL4} & 104503, 31047, 40665, 90217, 51886, 106837, 148374, 61504, 11952, 8746, 34253, 77393, 78996, 164404
   & 12312223212331 \\
\texttt{AL5} & 106106, 164404, 95757, 41537, 67044, 86139, 74187, 19967, 122867, 72584, 154786, 38331, 14286, 52617, 124470
   & 12312223212331 \\
\texttt{AL6} & 29266, 87837, 115672, 55180, 47986, 71979, 105577, 166069, 41282, 73900, 130550, 51828, 101735, 106067, 145428, 47986
   & 12312223212331 \\
\texttt{AL7} & 147113, 149052, 42220, 135917, 61172, 48037, 141515, 70648, 149271, 97356, 78623, 57513, 135479, 157027, 149052, 57294, 124064
   & 12312223212331 \\
\bottomrule
\end{longtable}
}

\begin{table}[H]
\centering
\caption{Comparison of computation scale and execution time between \texttt{PolyAL} and \texttt{LattE}.}
\footnotesize
\label{tab:comparison}
\begin{tabular}{l rr rr}
\toprule
\multirow{2}{*}{Problem ID}
& \multicolumn{2}{c}{\texttt{PolyAL} (Maple)}
& \multicolumn{2}{c}{\texttt{LattE} (C++)} \\
\cmidrule(lr){2-3}
\cmidrule(lr){4-5}
& No. of rational functions & Time (s)
& No. of rational functions & Time (s) \\
\midrule
\texttt{cuww4}  & 198  & 0.593   & 364     & 0.04    \\
\texttt{cuww5}  & 307  & 0.575   & 2514    & 0.30    \\
\texttt{prob1}  & 1465 & 11.159  & 10618   & 1.51    \\
\texttt{prob2}  & 1164 & 3.318   & 6244    & 0.94    \\
\texttt{prob3}  & 1570 & 17.201  & 12972   & 1.886   \\
\texttt{prob4}  & 1016 & 2.234   & 9732    & 1.33    \\
\texttt{prob5}  & 1442 & 4.175   & 8414    & 1.28    \\
\texttt{prob6}  & 1537 & 13.732  & 26448   & 5.81    \\
\texttt{prob7}  & 1294 & 6.765   & 20192   & 4.76    \\
\texttt{prob8}  & 2197 & 15.801  & 62044   & 14.76   \\
\texttt{prob9}  & 1563 & 6.261   & 36354   & 8.55    \\
\texttt{prob10} & 1781 & 5.993   & 38638   & 8.25    \\
\texttt{AL1} & 1802 & 8.899   & 68264   & 19.83   \\
\texttt{AL2} & 2483 & 14.760  & 105092  & 32.55   \\
\texttt{AL3} & 4599 & 78.921  & 441384  & 181.91  \\
\texttt{AL4} & 3507 & 76.813  & 461936  & 221.13  \\
\texttt{AL5} & 3426 & 17.469  & 409754  & 246.33  \\
\texttt{AL6} & 6308 & 231.193 & 1925320 & 1368.31 \\
\texttt{AL7} & 7141 & 707.071 & 2568690 & 2205.53 \\
\bottomrule
\end{tabular}
\end{table}

\section{Concluding Remarks}
Consider a polytope of the form $P(A,\b) = \{\alpha \in \R_{\geq 0}^n : A\alpha = \b\}$, where $A$ has $r$ rows. In general, the lattice point counting problem for such polytopes is $\#P$-hard. Thus, it is highly valuable to identify subclasses of polytopes belonging to $\mathcal{P}$—that is, those for which the value $\sigma_P(\mathbf{1})$ can be computed in polynomial time. Currently, two classes of polytopes are known to belong to $\mathcal{P}$: the first is when the dimension $n-r$ is fixed, solvable via Barvinok's algorithm \cite{barvinok1994polynomial}; the second is when $\Delta$ and $r$ are fixed and the matrix $A$ is $\Delta$-modular, solvable via the $\Delta$-modular algorithm of Gribanov and Zolotykh \cite{GribanovZolotykh2022}.

This paper contributes new families of polytopes to $\mathcal{P}$, naturally motivating the study of broader classes. Specifically, we define a matrix $A$ to be \emph{almost (resp., nearly) $\Delta$-modular} if removing several columns (resp., one column) from $A$ yields a $\Delta$-modular matrix. We have resolved the case where $r=1$ and $A$ is nearly $\Delta$-modular in Section \ref{sec-3}, and the techniques developed in Section \ref{sec-5} can be applied to the $r=2$ case. This framework should naturally extend to any fixed $r$ when $A$ is nearly $\Delta$-modular. Furthermore, we have begun investigating the case where $r=1$ and $A$ is almost $\Delta$-modular \cite{XinZZ2026}, and we ultimately aim to address the general case—that is, when $r$ is fixed and $A$ is almost $\Delta$-modular.






\noindent
{\small \textbf{Acknowledgements:}
This work is partially supported by the National Natural Science
Foundation of China [12571355].


\begin{thebibliography}{99}

\bibitem{AL04}
Karen Aardal and Arjen~K. Lenstra.
\newblock Hard equality constrained integer knapsacks.
\newblock {\em Math. Oper. Res.}, 29(3):724--738, 2004.

\bibitem{AardalScavuzzoWolsey2023}
Karen Aardal, Lara Scavuzzo, and L.~Wolsey.
\newblock A study of lattice reformulations for integer programming.
\newblock {\em Oper. Res. Lett.}, 51(4):401--407, 2023.

\bibitem{BBDDKV15}
Velleda Baldoni, Nicole Berline, J.~A. De~Loera, Brandon~E. Dutra, Matthias
  K{\"o}ppe, and Mich{\`e}le Vergne.
\newblock Coefficients of {Sylvester}'s denumerant.
\newblock {\em Integers}, 15:Paper A11, 2015.

\bibitem{barvinok1994polynomial}
Alexander Barvinok.
\newblock A polynomial time algorithm for counting integral points in polyhedra
  when the dimension is fixed.
\newblock {\em Math. Oper. Res.}, 19(4):769--779, 1994.

\bibitem{BP99}
Alexander Barvinok and James~E. Pommersheim.
\newblock An algorithmic theory of lattice points in polyhedra.
\newblock In {\em New perspectives in algebraic combinatorics}, pages 91--147.
  Cambridge: Cambridge University Press, 1999.

\bibitem{C97}
J.~W.~S. Cassels.
\newblock {\em An introduction to the geometry of numbers}.
\newblock Berlin: Springer, 1997.

\bibitem{CUWW97}
G.~Cornu{\'e}jols, R.~Urbaniak, R.~Weismantel, and L.~Wolsey.
\newblock Decomposition of integer programs and of generating sets.
\newblock In {\em Algorithms -- ESA '97. 5th annual European symposium, Graz,
  Austria, September 15--17, 1997. Proceedings}, pages 92--103. Berlin:
  Springer, 1997.

\bibitem{DeLoeraHawsHemmeckeHugginsYoshida2005}
J.~A. De~Loera, D.~Haws, R.~Hemmecke, P.~Huggins, and R.~Yoshida.
\newblock A computational study of integer programming algorithms based on
  {Barvinok}'s rational functions.
\newblock {\em Discrete Optim.}, 2(2):135--144, 2005.

\bibitem{DHK13}
J.~A. De~Loera, R.~Hemmecke, and Matthias K{\"o}ppe.
\newblock {\em Algebraic and geometric ideas in the theory of discrete
  optimization}, volume~14 of {\em MOS-SIAM Ser. Optim.}
\newblock Philadelphia, PA: Society for Industrial {and} Applied Mathematics
  (SIAM), 2013.

\bibitem{DeLoeraHemmeckeTauzerYoshida2004}
J.~A. De~Loera, R.~Hemmecke, Jeremiah Tauzer, and R.~Yoshida.
\newblock Effective lattice point counting in rational convex polytopes.
\newblock {\em J. Symb. Comput.}, 38(4):1273--1302, 2004.

\bibitem{GribanovZolotykh2022}
D.~V. Gribanov and N.~Yu. Zolotykh.
\newblock On lattice point counting in {{\(\varDelta\)}}-modular polyhedra.
\newblock {\em Optim. Lett.}, 16(7):1991--2018, 2022.

\bibitem{GLS93}
Martin Gr{\"o}tschel, L{\'a}szl{\'o} Lov{\'a}sz, and Alexander Schrijver.
\newblock {\em Geometric algorithms and combinatorial optimization}.
\newblock Berlin: Springer-Verlag, 1993.

\bibitem{KrishnamoorthyPataki2009}
Bala Krishnamoorthy and G{\'a}bor Pataki.
\newblock Column basis reduction and decomposable knapsack problems.
\newblock {\em Discrete Optim.}, 6(3):242--270, 2009.

\bibitem{LLL82}
Arjen~K. Lenstra, Hendrik~W. Lenstra, and L{\'a}szl{\'o} Lov{\'a}sz.
\newblock Factoring polynomials with rational coefficients.
\newblock {\em Math. Ann.}, 261:515--534, 1982.

\bibitem{S98}
Alexander Schrijver.
\newblock {\em Theory of linear and integer programming}.
\newblock Chichester: Wiley, 1998.

\bibitem{V13}
Joachim von~zur Gathen and J{\"u}rgen Gerhard.
\newblock {\em Modern computer algebra}.
\newblock Cambridge: Cambridge University Press, 2013.

\bibitem{X04}
Guoce Xin.
\newblock A fast algorithm for {MacMahon}'s partition analysis.
\newblock {\em Electron. J. Comb.}, 11(1):20, 2004.

\bibitem{X15}
Guoce Xin.
\newblock A {Euclid} style algorithm for {MacMahon}'s partition analysis.
\newblock {\em J. Comb. Theory, Ser. A}, 131:32--60, 2015.


\bibitem{XXZ25}
Guoce Xin, Xinyu Xu, and Zihao Zhang.
\newblock A combinatorial simplicial cone decomposition.
\newblock Preprint, {arXiv}:2501.06691 [math.{CO}] (2025), 2025.

\bibitem{XinZhang2023}
Guoce Xin and Chen Zhang.
\newblock An algebraic combinatorial approach to {Sylvester}'s denumerant.
\newblock {\em Ramanujan J.}, 66(4):28, 2025.

\bibitem{XZ25}
Guoce Xin and Chen Zhang.
\newblock A polynomial time algorithm for {Sylvester} waves when entries are
  bounded.
\newblock {\em Adv. Appl. Math.}, 170:16, 2025.

\bibitem{XinZZ2026}
Guoce Xin, Chen Zhang, and Zihao Zhang.
\newblock A polynomial-time algorithm for almost bounded denumerant.
\newblock in preparation.

\bibitem{xin2024combinatorial}
Guoce Xin, Yingrui Zhang, and Zihao Zhang.
\newblock A {Combinatorial} {Decomposition} of {Knapsack} {Cones}.
\newblock Preprint, {arXiv}:2406.13974 [math.{CO}] (2024), 2024.

\bibitem{XZZ25}
Guoce Xin, Yingrui Zhang, and Zihao Zhang.
\newblock Fast evaluation of generalized {Todd} polynomials: applications to
  {MacMahon}'s partition analysis and integer programming.
\newblock {\em J. Symb. Comput.}, 130:25, 2025.
\end{thebibliography}
\end{document}